\newtheorem{theorem}{Théorème}
\newtheorem{dfn}{Définition}
\newtheorem{lem}{Lemme}
\newtheorem{exm}{Exemple}
\newtheorem{prop}{Proposition}
\def\C{\mathbb{C}}
\def\N{\mathbb{N}}
\def\di{\displaystyle}
\def\K{\mathbb{K}}
\def\R{\mathbb{R}}
\title[Symétries de Lie et tissus implicites du plan]{Application des groupes de Lie à la recherche des symétries des tissus implicites du plan}
\author{Jacky Cresson}
\address{Université de Pau et des Pays de l'Adour - E2S, Laboratoire de Mathématiques et de leurs Applications, UMR CNRS 5142, Batiment IPRA, avenue de l'Université, 64000 Pau, France}
\author{Jordy Palafox}
\address{CY Tech, Département de Mathématiques, 2 Bd Lucien, 64000 Pau, France}
\email{jacky.cresson@univ-pau.fr, jordy.palafox@cyu.fr}
\begin{document}

\maketitle



\setcounter{tocdepth}{3}

\begin{abstract}
On retrouve des résultats de Alain Hénaut sur les groupes de symétrie des tissus implicites dans le cadre usuel des actions de groupes de Lie sur les équations différentielles. On donne aussi un lien entre l'algèbre de Lie des symétries d'un tissu et l'existence de polynômes de Darboux.
\end{abstract}


\section{Introduction}

Dans \cite{henaut}, Alain Hénaut obtient via des outils de géométrie algébrique et/ou différentielle une {\bf caractérisation des tissus du plan pararallélisables} via la dimension de leur {\bf groupe de symétries}. Dans cet article, nous donnons des démonstrations alternatives de ses résultats dans le {\bf cadre classique des symétries d'équations différentielles} exposé notamment par P.J. Olver dans son livre \cite{olver1}. Par ailleurs, l'{\bf algèbre de Lie du groupe des symétries d'un tissu} est une sous-algèbre de Lie du {\bf module de dérivation} associé à la {\bf courbe discriminante du tissu}. Ce module possède une {\bf interprétation dynamique} : il correspond à l'ensemble des champs de vecteurs laissant invariant (au sens dynamique) la courbe. On montre que le {\bf discriminant de la courbe} est un {\bf polynôme de Darboux} pour ces champs. L'article est organisé de la manière suivante:\\

La Section \ref{tissusimplicites} donne la définition des tissus implicites et les principaux objets qui y sont attachés. Dans la Section \ref{14}, après un bref rappel sur les groupes de symétrie des équations différentielles suivant P-J. Olver \cite{olver1}, nous donnons une caractérisation explicite des générateurs infinitésimaux des groupes de symétries d'un tissu implicite en fonction de son polynôme de présentation. Dans la Section \ref{sectionexemple}, des exemples de calculs effectifs de ces groupes de symétries sont donnés pour des tissus particuliers (parallèle, de Clairaut, de Zariski, etc). Dans la Section \ref{19}, nous explicitons le lien entre les symétries et l'algèbre de dérivations de la courbe discriminante en utilisant le théorie classique de Darboux sur les courbes invariantes d'équations différentielles. Enfin la Section \ref{perspective} discute quelques perspectives de ce travail.

\section{Tissus implicites du plan}
\label{tissusimplicites}

La géométrie des tissus est l'étude simultanée de feuilletages plongés dans un même espace. La référence classique est le livre de Bol et Blaschke \cite{BB}. On renvoie au livre de J.V. Pereira et L. Pirio \cite{webg} pour une présentation moderne du sujet et plus de détails. On se limite dans cet article aux tissus implicites définis par A. Hénaut dans \cite{henaut}.\\


On considère une {\bf équation différentielle du premier ordre polynomiale de degré $d$ à coefficients analytiques}, définie par
\begin{equation}
F(x,y,y')=a_0(x,y)(y')^d+a_1(x,y)(y')^{d-1}+\cdots +a_d(x,y)=0,
\end{equation}
où $y'=\frac{dy}{dx}$ et où les coefficients $a_i\in \mathbb{C}\lbrace x,y \rbrace$ pour tout $i\in \lbrace 1,...,d \rbrace$ (anneau des fonctions analytiques en $x$ et $y$). On appelle {\bf polynôme de présentation} de $F$ et on note $P_F$, le polynôme en la variable $z$ et de paramètres $x$ et $y$ associé au tissu $F$ et défini par:
\begin{equation}
P_F(z;x,y)=a_0(x,y)z^d+a_1(x,y)z^{d-1}+\cdots +a_d(x,y).
\end{equation}
On a $F(x,y,y')=P_F (y';x,y)$. Le $z$-discriminant de ce polynôme, noté $\Delta$ est défini par:
\begin{equation}
\Delta=a_0^{2d-2}\underset{1\leq i < j \leq d}{\prod}(p_i(x,y)-p_j(x,y))^2,
\end{equation} 
où les $p_1(x,y), \ldots, p_d(x,y)$ sont les racines de $P_F(z;x,y)$ (voir \cite[p. 403]{gelfand}) et le résultant est donné par :
$R_{P_F}:=Result(P_F,\partial_{z}P_F)=(-1)^{\frac{d(d-1)}{2}}a_0 \Delta$. On supposera dans la suite que l'on se place hors du lieu des singularités, c'est à dire $R_{P_F} \neq 0$, appelée {\bf condition de factorisation}. En effet, le polynôme $P_F$ admet alors $d$ racines distinctes et on peut dans ce cas factoriser le polynôme $P_F$ par rapport à la variable $z$ sous la forme
$P_F(z;x,y)=a_0(x,y)\underset{i=1}{\overset{d}{\prod}}(z-p_i(x,y)).$. On déduit de cette construction la forme factorisée de l'équation différentielle initiale:
\begin{equation}
\label{fac}
F(x,y,y')=a_0(x,y)\underset{i=1}{\overset{d}{\prod}} \Delta_i (x,y,y'),
\end{equation}
où $\Delta_i (x,y,y')=y'-p_i(x,y)$, qu'on appellera {\bf forme préparée} de $F$.\\

Chaque équation différentielle $\Delta_i (x,y,y')=0$ induit un feuilletage $F_i$ du plan $(x,y)$. La forme préparée définie donc naturellement $d$-familles de feuilletages $F_i$ en position générale en chaque point $(x,y)\in \C^2$ associé au système différentiel $\mathscr{S}=\{ \Delta_i (x,y,y')=0,\ i=1,\dots ,d \}$, ce qui constitue un $\mathbf{d}${\bf -tissu} (voir \cite{webg}, Section 1.1 p.15). On a donc la définition suivante:

\begin{dfn}[$d$-tissu implicite \cite{henaut}] On appelle $d$-tissu implicite associé à une équation différentielle polynomial $F(x,y,y')=0$ de degré $d$ du premier ordre à coefficients analytiques satisfaisant la condition de factorisation \eqref{fac}, le tissu noté $\mathscr{W}_F ( F_1 ,\dots ,F_d )$ où chaque feuille $F_i$ est associée à l'équation différentielle $\Delta_i (x,y,y')=0$,
\end{dfn}






De manière réciproque, la donnée d'un $d$-tissu explicite $\mathcal{W}(F_1,\ldots, F_d)$, où les $F_i$ sont des feuilletages en position générale, permet de définir un $d$-tissu implicite. Il suffit de considérer la forme préparée définie par les racines :
\begin{equation}
p_i (x,y)=-\frac{\partial_x(F_i)}{\partial_y(F_i)} (x,y).
\end{equation}

L'intérêt des tissus {\bf implicites} est de posséder une représentation {\bf globale} (l'équation différentielle). L'idée est donc de voir si des propriétés de cet objet global permettent de {\bf caractériser certaines propriétés des tissus}.

\section{Symétrie des tissus implicites}\label{14}

On caractérise les groupes de symétrie des tissus implicites en utilisant la théorie classique sur les équations différentielles invariantes sous l'action d'un groupe de Lie telle que présentée dans le livre de P-J. Olver \cite{olver1}. On retrouve ainsi (Théorème \ref{theorem5} et Théorème \ref{normalcarac}) des résultats énoncés par A. Hénaut (\cite{henaut}, (LS) section 2 p.118 et p.119). 

\subsection{Rappels sur les groupe de symétrie d'une équation différentielle}
\label{13}

On rappelle rapidement la notion de groupe de symétrie en suivant le livre de P-J. Olver \cite{olver1} directement appliqué aux équations différentielles d'ordre 1.

\subsubsection{Groupes de symétries et générateurs infinitésimaux}
Soit une équation différentielle d'ordre $1$ de la forme $\Delta(x,y,y') = y'-p(x,y)=0$ avec $y'=\frac{dy}{dx}$, $x,\ y \in \mathbb{C}$. 

\begin{dfn}
Soit $\mathscr{S}$ une équation différentielle. Un groupe de symétries de $\mathscr{S}$ est un groupe local de transformations $G$ agissant sur un sous-ensemble ouvert $M$ du produit cartésien $\mathbb{C}^2$ tel que si  $y:M \rightarrow \mathbb{C}$ est une solution de $\mathscr{S}$ et dès que $g\cdot y$ est définie pour $g\in G$, alors $g \cdot y(x)$ est aussi une solution du système.
\end{dfn}

Les groupes de transformations que l'on considère dépendent d'un paramètre $\varepsilon$ tel que :  
\begin{align*}
G_\varepsilon : x \in M \mapsto g_\varepsilon\cdot (x,f(x)) = (\gamma_\varepsilon(x,f(x)), \phi_\varepsilon(x,f(x)).
\end{align*}

On peut voir l'action d'un élément du groupe comme l'action d'un champ de vecteurs $X$ sur $\mathbb{R}^2$ par dérivation par rapport à ce paramètre $\varepsilon$ :
\begin{align*}
X=\frac{d (g_\varepsilon \cdot x)}{d \varepsilon}\vert_{\varepsilon=0} \partial_x+\frac{d (g_\varepsilon \cdot f(x))}{d \varepsilon}\vert_{\varepsilon=0} \partial_y = \alpha_1(x,y) \partial_x+\alpha_2(x,y) \partial_y.
\end{align*} 
Le champ de vecteurs $X$ est appelé {\bf générateur infinitésimal} du groupe de symétrie $G$.\\
Si $g_\varepsilon$ transforme $y$ solution du système en $\Tilde{y}$ une autre solution, on cherche à déterminer l'action $g_\varepsilon$ sur $y'$.
Cette transformation est appelée \textbf{prolongement} et nous allons la caractériser sur le champs de vecteurs $X$.

Comme les transformations sont tangentes à l'identité, on a $\Tilde{x} = x + \varepsilon \alpha_1(x,y)+\mathcal{O}(\varepsilon^2)$ et $\Tilde{y} = y + \varepsilon \alpha_2(x,y)+\mathcal{O}(\varepsilon^2)$. En notant $D_x$ la différentielle totale par rapport à $x$, on a : 
\begin{align*}
    \frac{d\Tilde{y}}{d \Tilde{x}} = \frac{D_x(\Tilde{y})}{D_x(\Tilde{x})} 
    & = y'+ \varepsilon ( D_x(\alpha_2(x,y(x)))-y'D_x(\alpha_1(x,y(x)))) + \mathcal{O}(\varepsilon ^2) \\
    &= y' + \varepsilon \eta + \mathcal{O}(\varepsilon^2).
\end{align*}

En développant l'expression de $\eta$, on obtient :
\begin{align*}
    \eta = \partial_x(\alpha_2(x,y))+y'(\partial_y(\alpha_2(x,y)- \partial_x(\alpha_1(x,y))-(y')^2 \partial_y(\alpha_1(x,y)).
\end{align*}

Ce terme nous permet de définir le prolongateur de $X$ à l'ordre $1$ :
\begin{dfn}
    On appelle prolongateur à l'ordre $1$ du champs de vecteurs $X=\alpha_1(x,y) \partial_x+\alpha_2(x,y) \partial_y$ le champs $\Tilde{X}$:
\begin{align*}
        \Tilde{X} = \alpha_1 \partial_x + \alpha_2 \partial_y + \left(\partial_x(\alpha_2) +y'(\partial_y(\alpha_2)- \partial_x(\alpha_1)) - (y')^2 \partial_y(\alpha_1)  \right)\partial_{y'}.
\end{align*}
\end{dfn}

L'étude des $d$-tissus implicites fait intervenir seulement des équations différentielles d'ordre $1$ de la forme $y'=p(x,y)$. On a:

\begin{theorem}\label{dim1}
Un champ de vecteurs $X=\alpha_1 \partial_x + \alpha_2 \partial_y$ est un générateur infinitésimal d'un groupe de symétries $G$ d'une équation différentielle de la forme $y'(x)=p(x,y(x))$ si et seulement si les fonctions $\alpha_1$ et $\alpha_2$ en les variables $x$ et $y$ sont solutions de:
\begin{align*}
-\alpha_1\partial_x(p)-\alpha_2\partial_y(p)+\partial_x(\alpha_2)+(\partial_y(\alpha_2)-\partial_x(\alpha_1))p-\partial_y(\alpha_1)p^2=0.
\end{align*}
\end{theorem}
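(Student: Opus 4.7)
The strategy is to apply the standard infinitesimal criterion of Lie--Olver directly to the equation $\Delta(x,y,y') = y' - p(x,y) = 0$. According to this criterion (see \cite{olver1}), a vector field $X = \alpha_1\partial_x + \alpha_2\partial_y$ generates a one-parameter group of symmetries of $\Delta = 0$ if and only if its first prolongation $\tilde X$ satisfies $\tilde X(\Delta) = 0$ on the solution submanifold $\{\Delta = 0\}$. So the proof reduces to a direct computation of $\tilde X(y' - p(x,y))$ followed by the substitution $y' = p(x,y)$.

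First I would simply compute $\tilde X(y' - p(x,y))$ using the explicit formula for the prolongation given just above the theorem. The $\partial_x$ and $\partial_y$ parts contribute $-\alpha_1 \partial_x(p) - \alpha_2 \partial_y(p)$, while the $\partial_{y'}$ part, acting on $y'$, reproduces the coefficient of $\partial_{y'}$ in $\tilde X$, namely $\partial_x(\alpha_2) + y'(\partial_y(\alpha_2) - \partial_x(\alpha_1)) - (y')^2 \partial_y(\alpha_1)$. Summing these yields
\begin{equation*}
\tilde X(\Delta) = -\alpha_1 \partial_x(p) - \alpha_2 \partial_y(p) + \partial_x(\alpha_2) + y'(\partial_y(\alpha_2) - \partial_x(\alpha_1)) - (y')^2 \partial_y(\alpha_1).
\end{equation*}

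Next I would impose the constraint $\Delta = 0$, i.e.\ replace every occurrence of $y'$ by $p(x,y)$. This substitution transforms $y'(\partial_y(\alpha_2) - \partial_x(\alpha_1))$ into $p(\partial_y(\alpha_2) - \partial_x(\alpha_1))$ and $(y')^2 \partial_y(\alpha_1)$ into $p^2 \partial_y(\alpha_1)$, giving precisely the equation announced in the theorem.

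There is essentially no conceptual obstacle here: the result is an immediate specialisation of Olver's infinitesimal symmetry criterion to the case of a single first-order, explicit ODE. The only mild subtlety is to verify that, since the equation $y' = p(x,y)$ is in solved form with respect to the highest derivative, vanishing of $\tilde X(\Delta)$ on $\{\Delta = 0\}$ is equivalent to vanishing after the pointwise substitution $y' = p$; this is exactly the setting where Olver's criterion becomes a single polynomial identity in $(x,y)$, justifying the absence of any free variable $y'$ in the final equation.
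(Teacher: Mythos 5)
Votre démonstration est correcte et suit essentiellement la même voie que l'article : appliquer le critère infinitésimal d'Olver en calculant $\tilde X(y'-p(x,y))$ avec le prolongateur d'ordre $1$, puis substituer $y'=p(x,y)$ sur le lieu $\Delta=0$ (la condition de rang maximal étant immédiate pour une équation résolue en $y'$). C'est exactement le calcul que l'article effectue en renvoyant au Theorem 2.31 d'Olver et en l'explicitant dans son exemple final.
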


Ce théorème est un cas particulier explicite du critère d'invariance général formulé par P-J. Olver \cite[Theorem 2.31, p.104]{olver1} dans le cas d'une équation différentielle d'ordre $1$.\\

Le calcul explicite des groupes de symétrie dépend fortement de la forme du système d'équations. Deux équations différentielles sont dites équivalentes s'il existe un changement de variables qui transforme l'une en l'autre. Une question naturelle est donc de savoir si les groupes de symétries calculés pour des équations différentielles équivalentes sont isomorphes. C'est le cas (voir \cite[Proposition 6.13, p.185]{olver2}) et nous utiliserons ce résultat à plusieurs reprises pour simplifier nos calculs.

\begin{prop}
\label{isogroupe}
Deux équations différentielles équivalentes ont des groupes de symétries isomorphes.
\end{prop}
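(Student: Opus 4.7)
Plan de démonstration. L'idée est d'exploiter le fait que l'équivalence entre deux équations différentielles $\mathscr{S}_1$ et $\mathscr{S}_2$ est réalisée par un difféomorphisme $\Phi : M_1 \to M_2$ entre les ouverts d'action, envoyant les solutions de $\mathscr{S}_1$ sur les solutions de $\mathscr{S}_2$. Je définirais alors l'application candidate à l'isomorphisme par simple conjugaison : $\Psi : G_1 \to G_2$, $g \mapsto \Phi \circ g \circ \Phi^{-1}$, où $G_i$ désigne le groupe de symétries de $\mathscr{S}_i$.

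Les vérifications à effectuer se découpent en trois étapes courtes. D'abord, montrer que $\Psi(g)$ est bien une symétrie de $\mathscr{S}_2$ : si $\tilde y$ est solution de $\mathscr{S}_2$, alors par équivalence $\Phi^{-1}(\tilde y)$ est solution de $\mathscr{S}_1$, donc $g \cdot \Phi^{-1}(\tilde y)$ aussi, et en appliquant $\Phi$ on obtient que $\Psi(g) \cdot \tilde y$ est bien solution de $\mathscr{S}_2$. Ensuite, un calcul immédiat donne $\Psi(g_1 g_2) = \Phi g_1 g_2 \Phi^{-1} = (\Phi g_1 \Phi^{-1})(\Phi g_2 \Phi^{-1}) = \Psi(g_1)\Psi(g_2)$, donc $\Psi$ est un morphisme de groupes. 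Enfin, la conjugaison par $\Phi^{-1}$ fournit le morphisme réciproque, ce qui garantit que $\Psi$ est un isomorphisme.

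La seule vraie difficulté réside dans la bonne gestion du caractère \emph{local} des groupes de transformations : il faut s'assurer que les domaines de définition se correspondent correctement via $\Phi$, ce qui est automatique dès lors que $\Phi$ est un difféomorphisme entre les ouverts considérés. Au besoin, je traduirais l'énoncé au niveau infinitésimal en utilisant le Théorème \ref{dim1} : l'isomorphisme $\Psi$ se lit alors comme le poussé-en-avant $X \mapsto \Phi_* X$ des champs de vecteurs générateurs, et la compatibilité du prolongement avec les changements de variables assure que $\Phi_* X$ satisfait la condition d'invariance pour $\mathscr{S}_2$ dès que $X$ la satisfait pour $\mathscr{S}_1$. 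On retrouve ainsi, par exponentiation, la conjugaison $g \mapsto \Phi g \Phi^{-1}$, et l'isomorphisme annoncé.
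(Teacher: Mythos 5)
Votre démonstration par conjugaison est correcte : l'équivalence étant réalisée par un difféomorphisme $\Phi$ envoyant solutions sur solutions, l'application $g \mapsto \Phi \circ g \circ \Phi^{-1}$ (et, au niveau infinitésimal, $X \mapsto \Phi_* X$ via la compatibilité du prolongement avec les changements de variables) fournit bien l'isomorphisme des groupes de symétries, y compris en tant que groupes locaux. C'est exactement l'argument standard sous-jacent au résultat que l'article ne redémontre pas mais cite (Olver, \emph{Equivalence, Invariants and Symmetry}, Proposition 6.13) ; votre rédaction coïncide donc en substance avec la preuve de référence.
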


L'ensemble des générateurs infinitésimaux d'un groupe de symétrie possède une structure classique d'algèbre de Lie (voir \cite[Corollary 2.40, p.115]{olver1}) dont la dimension est un invariant.

\subsection{Caractérisation des symétries d'un tissu implicite}

On commence par définir le groupe de symétrie d'un tissu du plan (voir \cite{henaut}):
\begin{dfn}
Soit $\mathscr{W}(F_1,...,F_d)$ un $d$-tissu. Un groupe $G$ est un groupe de symétries du tissu si et seulement si il laisse globalement invariant le tissu.
\end{dfn}

Le Lemme suivant permet le transfert de l'étude du feuilletage à celui de la famille d'équations différentielles associées. Précisément, en utilisant le critère d'invariance formulé par P-J. Olver (\cite{olver1}, Theorem 2.71 p.161) on a le résultat suivant:

\begin{lem}
\label{critere}
Soit $\mathscr{W}_F (F_1 ,\dots ,F_d)$ le $d$-tissu implicite défini par $F$. Alors $G$ est un groupe de symétrie de $\mathscr{W}_F (F_1 ,\dots ,F_d )$ si et seulement si $G$ est un groupe de symétrie de chacune des équations différentielles $\Delta_i (x,y,y')=0$ pour $i=1,\dots ,d$.
\end{lem}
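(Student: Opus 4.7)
The plan is to split the equivalence into two implications, the easy one being ``symmetry of each $\Delta_i$ implies symmetry of the web'', and the substantive one being the converse, which rests on a continuity argument exploiting the factorization condition.

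For the easy direction ($\Leftarrow$), if $G$ is a symmetry group of each equation $\Delta_i(x,y,y')=0$, then $G$ sends integral curves of $\Delta_i=0$ to integral curves of $\Delta_i=0$; equivalently, it preserves each foliation $F_i$ individually. A fortiori $G$ preserves the unordered collection $\{F_1,\ldots,F_d\}$, which is the web $\mathscr{W}_F$. This step is essentially the definition.

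For the nontrivial direction ($\Rightarrow$), I would argue that the one-parameter subgroup $G_\varepsilon$ cannot permute the foliations non-trivially. Fix a point $(x,y)$ outside the discriminant locus; the factorization condition $R_{P_F}\neq 0$ guarantees that the $d$ slopes $p_1(x,y),\ldots,p_d(x,y)$ are pairwise distinct, so that the $d$ tangent directions of the foliations $F_1,\ldots,F_d$ at $(x,y)$ are pairwise distinct. If $G$ preserves the web globally, then $g_\varepsilon$ sends the $d$ tangent directions at $(x,y)$ to the $d$ tangent directions at $g_\varepsilon\cdot(x,y)$, inducing a permutation $\sigma_\varepsilon\in\mathfrak{S}_d$. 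The map $\varepsilon\mapsto\sigma_\varepsilon$ is continuous, takes values in the discrete group $\mathfrak{S}_d$, and satisfies $\sigma_0=\mathrm{id}$; hence $\sigma_\varepsilon=\mathrm{id}$ for every $\varepsilon$ in the local parameter interval. Each foliation $F_i$ is therefore individually invariant under $G$, which by the very definition of a symmetry group of a differential equation means that $G$ is a symmetry group of each $\Delta_i(x,y,y')=0$. One can equivalently phrase this via Olver's invariance criterion (Theorem 2.71 of \cite{olver1}) applied equation by equation, which for first-order ODEs reduces to the condition of Theorem \ref{dim1}.

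The main obstacle is ruling out non-trivial permutations of the foliations by elements of $G$; this is exactly what the continuity-plus-discreteness argument handles, and it is the place where the factorization hypothesis $R_{P_F}\neq 0$ is used in an essential way, since it is precisely what separates the $d$ tangent directions at each point.
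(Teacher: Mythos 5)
Your proof is correct, and it is in fact more detailed than what the paper offers: the paper gives no written proof of Lemma \ref{critere}, it simply invokes Olver's infinitesimal invariance criterion for systems (\cite{olver1}, Theorem 2.71), implicitly treating the web as the system $\{\Delta_i(x,y,y')=0,\ i=1,\dots,d\}$ — which already presupposes the reduction you prove, namely that a one-parameter symmetry group of the web cannot permute the foliations. Your continuity-plus-discreteness argument (the induced permutation $\sigma_\varepsilon\in\mathfrak{S}_d$ of the $d$ pairwise distinct tangent directions off the discriminant is continuous in $\varepsilon$, hence constant equal to $\sigma_0=\mathrm{id}$) is exactly the point the paper glosses over, and you correctly locate where the factorization hypothesis $R_{P_F}\neq 0$ enters. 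One caveat worth making explicit: your forward implication uses in an essential way that $G$ is a \emph{connected} local (one-parameter) group tangent to the identity; for a general group "laissant globalement invariant le tissu" the statement is false, since a discrete transformation can permute the foliations (e.g.\ $(x,y)\mapsto(-x,y)$ for the $2$-web of slopes $\pm 1$). In the paper's framework, where symmetries are studied through their infinitesimal generators, this connectedness is implicit, so your argument buys a genuine justification of the transfer lemma at the mere cost of stating that hypothesis clearly, whereas the paper's citation-only route leaves the permutation issue unaddressed.
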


En utilisant le lemme \ref{critere} et le théorème \ref{dim1} on obtient donc la carcatérisation suivante des symétries d'un $d$-tissu:

\begin{theorem}[Groupe de symétries d'un $d$-tissu]
\label{theorem5}
Soit $\mathscr{W}_F (F_1,...,F_d)$ le $d$-tissu implicite associé à $F$. Soit $G$ un groupe de symétries de $\mathcal{W}_F$, alors tout générateur infinitésimal $X=\alpha_1(x,y)\partial_x+\alpha_2(x,y)\partial_y$ de $G$ satisfait le système d'équations:
\begin{equation} 
\label{eq}
-\alpha_1\partial_x(p_i)-\alpha_2\partial_y(p_i)+\partial_x(\alpha_2)+(\partial_y(\alpha_2)-\partial_x(\alpha_1))p_i-\partial_y(\alpha_1)p_i^2=0,
\end{equation}
où $p_i (x,y)=\frac{\partial_x F_i}{\partial_y F_i}$, $i=1,\dots ,d$.
\end{theorem}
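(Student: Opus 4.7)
Le plan est de faire une application directe des deux résultats antérieurs : le Lemme \ref{critere}, qui ramène l'étude des symétries du tissu à celle des symétries de chaque équation $\Delta_i=0$ prise séparément, et le Théorème \ref{dim1}, qui caractérise les générateurs infinitésimaux d'une équation différentielle scalaire d'ordre 1 de la forme $y'=p(x,y)$.

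Premièrement, je partirais de l'hypothèse que $G$ est un groupe de symétries du tissu $\mathscr{W}_F(F_1,\ldots ,F_d)$. D'après le Lemme \ref{critere}, cette propriété équivaut au fait que $G$ est un groupe de symétries de chacune des équations différentielles $\Delta_i(x,y,y')=y'-p_i(x,y)=0$ pour $i=1,\dots ,d$. Par conséquent, tout générateur infinitésimal $X=\alpha_1\partial_x+\alpha_2\partial_y$ de $G$ est simultanément un générateur infinitésimal pour chacun des groupes de symétries des équations $\Delta_i=0$.

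Deuxièmement, puisque chaque équation $\Delta_i$ est déjà sous la forme explicite $y'=p_i(x,y)$, le Théorème \ref{dim1} s'applique sans ajustement à chacune d'elles et impose à $\alpha_1, \alpha_2$ de vérifier l'identité (\ref{eq}) avec $p$ remplacé par $p_i$. L'écriture simultanée de ces $d$ contraintes, obtenue en parcourant $i=1,\ldots ,d$, fournit exactement le système annoncé. L'expression $p_i(x,y)=\partial_x F_i/\partial_y F_i$ résulte quant à elle de la correspondance explicite/implicite rappelée à la fin de la Section \ref{tissusimplicites}.

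L'étape qui pourrait sembler la plus délicate, à savoir le calcul explicite du prolongateur d'ordre 1 qui mène au critère d'invariance du Théorème \ref{dim1}, a déjà été effectuée en amont. La difficulté spécifique au cadre des tissus a été entièrement absorbée par le Lemme \ref{critere}, dont le rôle est précisément de permettre de découpler l'invariance globale du tissu en $d$ invariances locales indépendantes. La démonstration du théorème se réduit donc à l'enchaînement logique décrit ci-dessus, sans nouveau calcul.
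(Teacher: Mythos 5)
Votre proposition est correcte et suit exactement la démarche du papier : le Lemme \ref{critere} ramène l'invariance du tissu à celle de chaque équation $\Delta_i(x,y,y')=y'-p_i(x,y)=0$, puis le Théorème \ref{dim1} appliqué à chacune avec $p=p_i$ donne directement le système \eqref{eq}. Aucune différence de fond avec l'argument du texte.
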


Ce système peut se mettre sous forme "normale":\\

On note $V_{i:j}$ les matrices de type Vandermonde définies par $V_{i:j}=\left ( 
\begin{array}{ccc}
1 & p_i & p_i^2 \\
1 & \vdots & \vdots  \\
1 & p_j & p_j^2 
\end{array}
\right )$ et $M_{i:j}$ les matrices définies par $M_{i:j}=\left ( 
\begin{array}{cc}
\partial_x p_i & \partial_y p_i \\
\vdots & \vdots \\
\partial_x p_j & \partial_y p_j 
\end{array}
\right ) $. 
On note $\mathbf{\alpha} =(\alpha_1 ,\alpha_2 )$. L'écriture matricielle du système \eqref{eq} est donnée par
\begin{equation}
\left \{
\begin{array}{l}
-M_{1:3} \mathbf{\alpha} + V_{1:3} \left ( 
\begin{array}{c}
-\partial_x \alpha_2 \\
\partial_x \alpha_1 -\partial_y \alpha_2 \\
\partial_y \alpha_1
\end{array}
\right ) 
=0 ,\\
-M_{4:d} \mathbf{\alpha} + V_{4:d} \left ( 
\begin{array}{c}
-\partial_x \alpha_2 \\
\partial_x \alpha_1 -\partial_y \alpha_2 \\
\partial_y \alpha_1
\end{array}
\right ) 
=0 .
\end{array}
\right . 
\end{equation}

La matrice $V_{1:3}$ est une matrice de Vandermonde qui est inversible car $p_1$, $p_2$ et $p_3$ sont deux à deux distincts. Le système est donc équivalent à la forme suivante: 

\begin{equation}
\left \{
\begin{array}{l}
-V_{1:3}^{-1} M_{1:3} \mathbf{\alpha} + \left ( 
\begin{array}{c}
-\partial_x \alpha_2 \\
\partial_x \alpha_1 -\partial_y \alpha_2 \\
\partial_y \alpha_1
\end{array}
\right ) 
=0 ,\\
C_d  \mathbf{\alpha}  =0 ,
\end{array}
\right . 
\end{equation}
où 
\begin{equation}
C_d = -M_{4:d} +V_{4:d} V_{1:3}^{-1} M_{1:3} ,
\end{equation}
est appelée {\bf matrice de compatibilité}.\\

On obtient donc le système noté $(\mathscr{S})$ par A. Henaut:

\begin{theorem}[Equation normalisée du groupe de symétries d'un tissu implicite]
\label{normalcarac}
Pour $d\geq 3$, le système d'équations \ref{eq} peut s'écrire sous une forme normalisée:
 \begin{align*}
\tag{$\star_d$}
\left\{\begin{array}{ccc}
-\partial_x( \alpha_2) &+g_d \alpha_1+h_d \alpha_2=0 \\
\partial_x( \alpha_1)-\partial_y(\alpha_2)&+g_{d-1}\alpha_1+h_{d-1}\alpha_2 =0 \\
\partial_y(\alpha_1)&+g_{d-2}\alpha_1+h_{d-2}\alpha_2=0 \\
&g_{d-3}\alpha_1+h_{d-3}\alpha_2=0 \\
&\vdots\\
&g_1 \alpha_1+h_1 \alpha_2=0.
\end{array}\right.
\end{align*}
où les coefficients $g_i$ et $h_i$, pour $i=1,...,d$, dépendent des pentes $p_i$.
\end{theorem}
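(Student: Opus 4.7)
L'approche consiste à expliciter la réduction matricielle déjà introduite immédiatement avant l'énoncé et à lire sur celle-ci la forme normalisée. On commence par regrouper le système \eqref{eq} dans les trois inconnues $u_1 = -\partial_x\alpha_2$, $u_2 = \partial_x\alpha_1 - \partial_y\alpha_2$ et $u_3 = \partial_y\alpha_1$, ce qui conduit directement au système à deux blocs $-M_{1:3}\mathbf{\alpha} + V_{1:3}(u_1,u_2,u_3)^T = 0$ et $-M_{4:d}\mathbf{\alpha} + V_{4:d}(u_1,u_2,u_3)^T = 0$ rappelé dans le texte.

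La première étape consiste à utiliser que $V_{1:3}$, qui est de Vandermonde sur $p_1, p_2, p_3$ deux à deux distincts (grâce à la condition de factorisation $R_{P_F}\neq 0$), est inversible. La résolution du premier bloc donne l'identité explicite
\[
\begin{pmatrix} u_1 \\ u_2 \\ u_3 \end{pmatrix} = V_{1:3}^{-1} M_{1:3} \begin{pmatrix} \alpha_1 \\ \alpha_2 \end{pmatrix}.
\]
Le membre de droite est une matrice $3\times 2$ dont les entrées sont des fonctions analytiques ne dépendant que de $p_1, p_2, p_3$ et de leurs dérivées partielles premières. En notant $(g_d,h_d)$, $(g_{d-1},h_{d-1})$, $(g_{d-2},h_{d-2})$ ses trois lignes successives, on récupère exactement les trois premières lignes de $(\star_d)$.

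La seconde étape consiste à substituer ces expressions de $u_1, u_2, u_3$ dans les $d-3$ équations restantes (indexées par $i=4,\dots,d$). Comme chacune de ces équations est linéaire en $(u_1,u_2,u_3)$ et en $(\alpha_1,\alpha_2)$, la substitution élimine toutes les dérivées partielles de $\alpha_1,\alpha_2$ et ne laisse subsister qu'un système purement algébrique, qui n'est autre que $C_d\,(\alpha_1,\alpha_2)^T = 0$ pour la matrice de compatibilité $C_d = -M_{4:d} + V_{4:d}V_{1:3}^{-1}M_{1:3}$ déjà introduite. En étiquetant ses lignes par $(g_{d-3},h_{d-3}),\dots,(g_1,h_1)$, on obtient les $d-3$ dernières lignes de $(\star_d)$.

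Aucun obstacle conceptuel n'apparaît : la preuve repose essentiellement sur une inversion de Vandermonde suivie d'une substitution. Le seul point méritant d'être soigné est la gestion de la numérotation des $g_i, h_i$, ainsi que la vérification que toutes les fonctions obtenues appartiennent effectivement à l'anneau engendré sur $\C\lbrace x,y \rbrace$ par $p_1,\dots,p_d$ et leurs dérivées partielles premières, de sorte qu'elles dépendent bien des pentes comme annoncé dans l'énoncé.
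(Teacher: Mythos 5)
Votre démonstration est correcte et suit essentiellement la même démarche que celle du texte : mise sous forme matricielle avec les blocs $V_{1:3}$, $M_{1:3}$, $V_{4:d}$, $M_{4:d}$, inversion de la matrice de Vandermonde $V_{1:3}$ (licite car $p_1,p_2,p_3$ sont deux à deux distincts sous la condition de factorisation), puis identification des trois premières lignes et des relations de compatibilité $C_d\,\mathbf{\alpha}=0$ pour les lignes restantes. Seule réserve mineure : les $g_i,h_i$ du texte sont définis au signe près par rapport aux entrées de $V_{1:3}^{-1}M_{1:3}$, ce qui ne change rien au fond.
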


On peut bien entendu expliciter les coefficients $g_i$ et $h_i$. Par exemple, on obtient:
\begin{equation}
\left .
\begin{array}{l}
g_1 = - \di\frac{p_2 p_3}{(p_3 -p_1 ) (p_2 -p_1)} \partial_x p_1 +
\di \frac{p_1 p_3}{(p_3 -p_2)(p_2 -p_1)} \partial_x p_2 
- \di\frac{p_1 p_2}{(p_3 -p_1) (p_3 -p_2 )} \partial_x p_3 ,\\
g_2 = \di\frac{p_2 +p_3}{(p_3 -p_1 ) (p_2 -p_1)} \partial_x p_1 
-\di \frac{p_1 +p_3}{(p_3 -p_2)(p_2 -p_1)} \partial_x p_2 
+ \di\frac{p_1 +p_2}{(p_3 -p_1) (p_3 -p_2 )} \partial_x p_3 ,\\
g_3 = - \di\frac{1}{(p_3 -p_1 ) (p_2 -p_1)} \partial_x p_1 +
\di \frac{1}{(p_3 -p_2)(p_2 -p_1)} \partial_x p_2 
- \di\frac{1}{(p_3 -p_1) (p_3 -p_2 )} \partial_x p_3 ,
\end{array}
\right .
\end{equation}
et des expressions analogues pour $h_1$, $h_2$ et $h_3$ en remplacant les $\partial_x p_i$ par des $\partial_y p_i$. \\

On calcule de manière analogue les autres coefficients.\\

Le système ainsi formé est par ailleurs de la même forme que celui obtenu par A. Henaut (\cite{henaut2}, équation ($\star_d$) p.433) caractérisant les {\bf relations abéliennes} d'un tissu.\\

Les relations $C_d \mathbf{\alpha}  =0$ sont appelées {\bf relations de compatibilité} par A. Hénaut.\\

L'étude des solutions de ce système peut s'effectuer dans le cadre du théorème de Cauchy-Kovaleskaya tel qu'exposé dans (\cite{olver1}, Chap.2, p.162) dont on reprend les notations. Pour $\mathbf{\alpha}=(\alpha_1 ,\alpha_2 )$, on note $\mathbf{\alpha}^{(1)} = (\alpha_1 ,\alpha_2 ,\partial_x \alpha_1 ,\partial_x \alpha_2 ,\partial_y \alpha_1 ,\partial_y \alpha_2 )$. Le système ($\star_d$) est donc équivalent à $d$ équations $\Delta_1 (x,\mathbf{\alpha}^{(1)} )=0,\dots, \Delta_d (x,\mathbf{\alpha}^{(1)} )=0$. Pour $d\geq 3$, on obtient un système sur-déterminé au sens de P.J. Olver (\cite{olver1}, Definition 2.86,p.170-171). Les conditions de compatibilité ainsi que la troisième relation peuvent être vue comme des {\bf conditions d'intégrabilité} par rapport aux solutions du système 
\begin{equation}
\left\{
\begin{array}{ccc}
-\partial_x( \alpha_2) &+g_d \alpha_1+h_d \alpha_2=0 ,\\
\partial_x( \alpha_1)-\partial_y(\alpha_2)&+g_{d-1}\alpha_1+h_{d-1}\alpha_2 =0 
\end{array}
\right .
\end{equation}

Ces conditions vont compromettre la résolubilité des systèmes ($\star_d$) pour $d\geq 3$. En particulier, on a:

\begin{prop} 
Pour $d\geq 3$, le système \eqref{eq} n'admet génériquement pas de solutions.
\end{prop}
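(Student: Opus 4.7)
The overall plan is to exploit the overdetermined nature of $(\star_d)$ for $d\geq 3$: with only the two unknowns $\alpha_1,\alpha_2$ against $d$ scalar equations, compatibility and integrability conditions accumulate and generically forbid nonzero solutions. I would split the argument along the two regimes $d\geq 5$ and $d\in\{3,4\}$.

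For $d\geq 5$, the conclusion is essentially immediate. The system $(\star_d)$ contains at least two purely algebraic relations
$$g_1\alpha_1+h_1\alpha_2=0,\qquad g_2\alpha_1+h_2\alpha_2=0,$$
whose coefficients are explicit rational functions of the pentes $p_i$ and de leurs premières dérivées partielles (cf.\ the displayed formulas for $g_1,g_2$ above). I would check that the determinant $g_1h_2-g_2h_1$, viewed as a symbolic polynomial in $p_i,\partial_x p_i,\partial_y p_i$, is not identically zero. On the open dense subset of coefficients $a_0,\dots,a_d\in\mathbb{C}\{x,y\}$ for which this determinant does not vanish on a nonempty open set of $(x,y)$, the linear system forces $\alpha_1=\alpha_2\equiv 0$ pointwise on that open set, hence globally by analytic continuation.

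For $d=3$ and $d=4$, there are respectively zero and one purely algebraic equations in $(\star_d)$, so pointwise linear algebra alone does not suffice. The plan is to prolong the system: using the three differential equations to express $\partial_x\alpha_2$, $\partial_y\alpha_1$, and $\partial_x\alpha_1-\partial_y\alpha_2$ as linear combinations of $\alpha_1,\alpha_2$, the cross-derivative identities $\partial_x\partial_y\alpha_i=\partial_y\partial_x\alpha_i$ (for $i=1,2$) produce new linear relations in $\alpha_1,\alpha_2$ whose coefficients are polynomial expressions in the $g_i,h_i$ and their partials. Iterating the prolongation if necessary, one extracts at least two independent zeroth-order relations on $(\alpha_1,\alpha_2)$ and then concludes as in the case $d\geq 5$.

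The main obstacle is the genericity statement in the low-$d$ cases: one must verify that the coefficients produced by the successive prolongations are \emph{not} formal identities in the symbols $p_i,\partial_x p_i,\partial_y p_i,\dots$, since on the special tissus treated in Section \ref{sectionexemple} (tissu parallèle, de Clairaut, de Zariski) these coefficients necessarily vanish identically, giving precisely the positive-dimensional symmetry algebras that motivate the paper. The verification reduces to exhibiting a single explicit choice of $(p_1,p_2,p_3)$ for $d=3$ (resp.\ $(p_1,\dots,p_4)$ for $d=4$) for which the obstruction is nonzero; genericity then follows from the openness of the nonvanishing locus in the space of coefficients $a_0,\dots,a_d$ of $P_F$.
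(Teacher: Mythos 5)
Your overall route coincides with the paper's — and in fact the paper gives no separate proof of this proposition at all: it is stated as a direct consequence of the preceding observation that for $d\geq 3$ the normalized system $(\star_d)$ is overdetermined in Olver's sense, the compatibility relations $C_d\,\alpha=0$ together with the third equation playing the role of integrability conditions for the two remaining evolution equations, and that these conditions generically obstruct solvability. So your proposal is an attempt to make precise exactly what the text leaves at the level of a remark, which is welcome. One small slip for $d\geq 5$: the purely algebraic relations are the rows of the compatibility matrix $C_d=-M_{4:d}+V_{4:d}V_{1:3}^{-1}M_{1:3}$, whose coefficients also involve $p_4,\dots,p_d$; the displayed formulas for $g_1,g_2$ in the paper are the coefficients of the differential part in the case $d=3$, not of these rows. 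This does not affect your argument (the relevant $2\times 2$ minor is still an explicit rational expression in the $p_i$ and their first derivatives, and its non-identical vanishing can be checked on a single example), but the citation is off.

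The more substantial point concerns $d=3,4$. The three differential equations of $(\star_d)$ determine $\partial_x\alpha_2$, $\partial_y\alpha_1$ and only the combination $\partial_x\alpha_1-\partial_y\alpha_2$; consequently the cross-derivative identities do not produce zeroth-order relations in $(\alpha_1,\alpha_2)$ as you assert, but relations which also involve the undetermined component, say $u=\partial_x\alpha_1$. The correct procedure is to close the equations into a linear Frobenius-type (Lie--Cartan) system in the three unknowns $(\alpha_1,\alpha_2,u)$ — this closure is precisely what yields Hénaut's bound $0$, $1$ or $3$ on the dimension quoted in Section 4 — and then to show that the successive integrability conditions of that closed system generically have rank $3$. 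For the genericity itself you must check that the corresponding universal minors, which are polynomials in a finite jet of the $p_i$ (equivalently of the $a_i$), are not identically zero; a single web with trivial symmetry algebra would do, but note that none of the examples of Section 4 qualifies, since they all admit at least one symmetry. Finally, since "générique" ranges over the infinite-dimensional space $\mathbb{C}\{x,y\}$, the openness/density claim should be phrased as non-vanishing of these finitely many jet polynomials; the paper is equally informal on this point, but in a written-out proof this is where the real content of the proposition lies.
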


Une autre façon de formuler le résultat précédent est: {\it Pour $d\geq 3$, un $d$-tissu générique n'admet pas de symétries}.\\

Dans les sections suivantes, nous donnons des exemples de résolutions explicites du système ($\star_d$) pour des tissus donnés. 

\section{Exemples de Groupes de symétries de tissus}
\label{sectionexemple}

Nous calculons explicitement l'algèbre de Lie du groupe de symétries de $d$-tissus. Les cas $d=1$ et $d=2$ sont triviaux et correspondent à des algèbres de Lie de dimensions infinies. Dans le cas $d\geq 3$, les algèbres de Lie obtenues sont de dimension $1$ ou $3$ illustrant ainsi le résultat général de A. Hénaut (\cite{henaut},Proposition 1 p.124) montrant que l'algèbre de Lie des groupes de symétries d'un tissu implicite est toujours de dimension $0$, $1$ ou $3$. 

\subsection{Cas des $1-$ et $2-$tissus}

Pour les tissus du plan, on voit le cas particulier des 1-tissus et 2-tissus dont les algèbres de Lie des symétries sont de dimension infinie:

\begin{lem}[1-tissu]L'algèbre de Lie des symétries d'un 1-tissu est de dimension infinie, chaque symétrie est de la forme $X=\alpha_1(x,y)\partial_x+\alpha_2(y)\partial_y$, avec $\alpha_1 \in \mathbb{C}\lbrace x,y \rbrace$ et $\alpha_2\in \mathbb{C}\lbrace y \rbrace$.
\end{lem}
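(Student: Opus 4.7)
La stratégie est de se ramener, par un changement de variables, au cas d'un $1$-tissu trivial, puis d'appliquer le Théorème \ref{dim1}. Un $1$-tissu implicite est défini par une unique équation différentielle $\Delta(x,y,y')=y'-p(x,y)=0$, et le théorème classique de redressement local des feuilletages (conséquence du théorème d'existence pour les équations différentielles ordinaires) assure qu'il existe un changement de coordonnées analytique local qui envoie cette équation sur l'équation triviale $y'=0$. D'après la Proposition \ref{isogroupe}, les groupes de symétries sont isomorphes, et il suffit donc d'étudier les symétries du tissu trivial.

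Pour l'équation $y'=0$, c'est-à-dire $p\equiv 0$, le Théorème \ref{dim1} affirme que $X=\alpha_1\partial_x+\alpha_2\partial_y$ est un générateur infinitésimal de symétrie si et seulement si
\begin{equation*}
-\alpha_1\partial_x(p)-\alpha_2\partial_y(p)+\partial_x(\alpha_2)+(\partial_y(\alpha_2)-\partial_x(\alpha_1))p-\partial_y(\alpha_1)p^2=0,
\end{equation*}
qui se réduit immédiatement à la seule condition $\partial_x(\alpha_2)=0$. Les solutions analytiques sont donc exactement les couples $(\alpha_1,\alpha_2)$ avec $\alpha_1\in\mathbb{C}\{x,y\}$ arbitraire et $\alpha_2\in\mathbb{C}\{y\}$, ce qui fournit la forme annoncée pour toute symétrie du tissu.

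Pour conclure sur la dimension infinie de l'algèbre de Lie, il suffit d'exhiber une famille libre infinie: par exemple, les champs $X_k=x^k\partial_x$ pour $k\in\mathbb{N}$ vérifient clairement la condition ci-dessus et sont linéairement indépendants sur $\mathbb{C}$. La structure d'algèbre de Lie (stabilité par crochet) provient du corollaire général rappelé à la fin de la section \ref{13} (\cite{olver1}, Corollary 2.40). L'étape délicate est purement formelle: il faut s'assurer que l'équivalence par changement de variables préserve bien la structure d'algèbre de Lie des générateurs infinitésimaux et pas seulement le groupe ponctuellement, ce qui est précisément le contenu invoqué de la Proposition \ref{isogroupe}.
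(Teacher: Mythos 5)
Votre démonstration est correcte et suit essentiellement la même démarche que celle de l'article : réduction par changement de variables au cas $p_1=0$ (via la Proposition \ref{isogroupe}), puis application du Théorème \ref{dim1} qui réduit la condition de symétrie à $\partial_x(\alpha_2)=0$. Vous ajoutez seulement des détails explicites (théorème de redressement, famille libre $x^k\partial_x$) que l'article laisse implicites.
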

\begin{proof}[Démonstration]
Pour un 1-tissu, on peut supposer, quitte à faire un changement de variables, que  $p_1=0$. Ainsi le système \eqref{eq} se réduit à $\partial_x(\alpha_2)=0$ et aucune condition n'apparaît sur $\alpha_1$. L'algèbre de Lie est de dimension infinie et engendrée par: $X=\alpha_1(x,y)\partial_x+\alpha_2(y)\partial_y$ où $\alpha_2(y)$ est analytique en $y$. La proposition \ref{isogroupe} termine la démonstration.
\end{proof}

\begin{lem}[2-tissu]
L'algèbre de Lie des symétries d'un 2-tissu est de dimension infinie.
\end{lem}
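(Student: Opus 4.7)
The plan is to use Proposition \ref{isogroupe} to reduce to a normal form, then solve the resulting PDE system explicitly and exhibit infinitely many linearly independent generators.

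First I would normalise the tissu. Given a $2$-tissu with slopes $p_1 ,p_2$ in general position, the two foliations $F_1 ,F_2$ can be locally integrated to give two analytic first integrals $u(x,y)$ and $v(x,y)$ with linearly independent differentials. Using $(u,v)$ as new coordinates, the two foliations become $u=\mathrm{const}$ and $v=\mathrm{const}$; that is, in the new coordinates the slopes are $p_1 =0$ and $p_2 =\infty$. To stay inside the framework $y'=p(x,y)$ used in Theorem \ref{dim1}, one applies a further affine change of variables so that the two foliations become the level sets of $y$ and of $y-x$, giving the normal form $p_1 =0$, $p_2 =1$. By Proposition \ref{isogroupe}, the symmetry algebras are isomorphic, so it suffices to treat this normal form.

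Next I would plug $p_1=0$ and $p_2=1$ into the characterisation \eqref{eq} from Theorem \ref{theorem5}. For $p_1=0$, equation \eqref{eq} collapses to
\begin{equation*}
\partial_x (\alpha_2 )=0,
\end{equation*}
so $\alpha_2 =\alpha_2 (y)$ is an arbitrary analytic function of $y$ alone. For $p_2=1$, equation \eqref{eq} becomes
\begin{equation*}
\partial_x (\alpha_2 )+\partial_y (\alpha_2 )-\partial_x (\alpha_1 )-\partial_y (\alpha_1 )=0,
\end{equation*}
which, after using $\partial_x \alpha_2 =0$, reduces to the linear transport equation
\begin{equation*}
(\partial_x +\partial_y )\alpha_1 =\alpha_2' (y).
\end{equation*}

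Finally I would solve this transport equation by the method of characteristics, introducing $v=x$, $u=y-x$, which turns the equation into $\partial_v \alpha_1 =\alpha_2' (u+v)$. A direct integration yields
\begin{equation*}
\alpha_1 (x,y)=\alpha_2 (y)+\beta (y-x),
\end{equation*}
where $\beta\in\mathbb{C}\lbrace t\rbrace$ is arbitrary. Hence every symmetry has the form $X=\bigl(\alpha_2 (y)+\beta (y-x)\bigr)\partial_x +\alpha_2 (y)\partial_y$ with $\alpha_2$ and $\beta$ two arbitrary analytic functions of one variable, and the algebra is indeed of infinite dimension.

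The only delicate step is the normalisation: one must justify that a $2$-tissu in general position can always be brought, locally, to the model $(p_1 ,p_2 )=(0,1)$. This is nothing but the classical parallelisability of $2$-webs (integrate each foliation, use the first integrals as coordinates, then scale); once it is accepted, the remainder of the proof is the short calculation above.
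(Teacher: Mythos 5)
Your proposal is correct, and up to the final step it follows the paper's own route: the paper likewise invokes Proposition \ref{isogroupe} to reduce to the straightened model $p_1=0$, $p_2=1$, and obtains exactly the same two equations $\partial_x(\alpha_2)=0$ and $\partial_x(\alpha_1)+\partial_y(\alpha_1)=\partial_y(\alpha_2)$. Where you diverge is in how the transport equation is handled: the paper expands $\alpha_1$ and $\alpha_2$ in power series and records the resulting recurrence relations on the coefficients, leaving the infinite-dimensionality implicit in the freedom these relations allow, whereas you integrate along characteristics and obtain the closed form $\alpha_1(x,y)=\alpha_2(y)+\beta(y-x)$, $\alpha_2=\alpha_2(y)$ with $\alpha_2,\beta$ arbitrary analytic functions of one variable. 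Your resolution is the cleaner of the two for the stated purpose, since it exhibits explicitly an infinite family of independent generators (already $\beta(y-x)\partial_x$ for arbitrary $\beta$ suffices), while the paper's coefficient relations would still need a short argument to conclude that infinitely many free parameters remain. Your care about the normalisation (integrating the two foliations, then an affine change sending the first integrals to $y$ and $y-x$) is exactly what the paper compresses into ``changement de variables redressant les feuilles'', so nothing is missing there.
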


\begin{proof}[Démonstration]
Pour $d=2$, avec $p_1 \neq p_2$ on peut supposer, quitte à faire un changement de variables "redressant" les feuilles, que $p_1=0$ et $p_2=1$. On obtient les deux équations $\partial_x(\alpha_2)=0$ pour $p_1=0$ et $-\partial_x(\alpha_2)+(\partial_x(\alpha_1)-\partial_y(\alpha_2))+\partial_y(\alpha_1)=0$ pour $p_2=1$. Comme pour le cas précédent, on a que $\alpha_2$ est une fonction de la variable $y$. La seconde équation est $\partial_x(\alpha_1)+\partial_y(\alpha_1)=\partial_y(\alpha_2)$. Si les solutions ont la forme générale $\alpha_1=\underset{j,k \geq 0}{\sum}p_{j,k}x^jy^k$ et $\alpha_2=\underset{j\geq 0}{\sum}q_jy^j$ alors la dernière équation implique $\forall i \geq 1$, $\forall j\geq 0$, $p_{i+1,j}(i+1)+p_{i,j+1}(j+1)=0$ et si $i=0$ alors $\forall j \geq 1$, $p_{1,j}+p_{0,j+1}(j+1)=q_{j+1}(j+1)$. Ces dernières relations définissent les relations sur les coefficients des symétries d'un 2-tissu. La proposition \ref{isogroupe} termine la démonstration.
\end{proof}

\subsection{Le cas parallèle}

Les tissus parallèles, donnés par des feuilletages de droites parallèles, jouent un rôle particulier dans l'étude des tissus.

\begin{dfn}[Tissu parallèle] Un $d$-tissu est dit {\it parallèle} s'il est donné par la superposition de $d$ pinceaux de droites en position générale, écrit sous la forme $\mathcal{W}(a_1x-b_1y,...,a_dx-b_dy)$ où $(a_i,b_i) \in \C^2 \setminus \lbrace 0 \rbrace$. L'hypothèse de position générale est équivalente à $p_i\neq p_j, \ \forall i\neq j$ où $p_i=-\frac{a_i}{b_i}$, $i=1,\ldots, d$, représentent les pentes du pinceau.
\end{dfn}

La terminologie de tissu parallèle vient du fait que chaque feuilletage est constitué de droites parallèles. Les symétries d'un tel tissu sont données par le Lemme suivant:

\begin{lem}
\label{casparallel}
Soit un $d$-tissu parallèle, avec $d\geq 3$, défini par les pentes constantes $p_i(x,y)$ $i=1,...,d$. Alors son algèbre de Lie des symétries est engendrée par les trois générateurs infinitésimaux $\mathfrak{g}=\lbrace \partial_x, \ \partial_y, \ x\partial_x+y\partial_y \rbrace$.
\end{lem}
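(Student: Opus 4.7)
The plan is to apply Theorem~\ref{theorem5} directly, exploiting that the pentes $p_i$ are \emph{constantes}. Since $\partial_x p_i = \partial_y p_i = 0$ for each $i$, the system \eqref{eq} collapses to the single family of scalar equations
\begin{equation*}
\partial_x \alpha_2 + (\partial_y \alpha_2 - \partial_x \alpha_1)\, p_i - \partial_y \alpha_1 \, p_i^2 = 0, \qquad i = 1,\dots,d.
\end{equation*}
This can be read as saying that the polynomial $Q(z) = -\partial_y \alpha_1 \, z^2 + (\partial_y \alpha_2 - \partial_x \alpha_1)\, z + \partial_x \alpha_2$, whose coefficients depend on $(x,y)$ but not on $z$, vanishes at the $d$ distinct complex numbers $p_1, \dots, p_d$.

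The key observation, which replaces any real calculation, is that $Q$ has degree at most $2$ in $z$ but admits $d \geq 3$ distinct roots; therefore $Q \equiv 0$. This yields the decoupled system
\begin{equation*}
\partial_x \alpha_2 = 0, \qquad \partial_y \alpha_2 - \partial_x \alpha_1 = 0, \qquad \partial_y \alpha_1 = 0.
\end{equation*}

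From the first and the third equations, $\alpha_2$ is a function of $y$ only and $\alpha_1$ is a function of $x$ only. The middle equation then reads $\alpha_2'(y) = \alpha_1'(x)$, forcing both sides to be a common constant $c \in \mathbb{C}$. Integrating gives $\alpha_1(x) = c x + a$ and $\alpha_2(y) = c y + b$ for constants $a, b \in \mathbb{C}$, hence
\begin{equation*}
X = a\, \partial_x + b\, \partial_y + c\,(x\partial_x + y\partial_y).
\end{equation*}
This exhibits a basis $\{\partial_x,\, \partial_y,\, x\partial_x + y\partial_y\}$ of the Lie algebra of symmetries, proving the lemma. No genuine obstacle arises; the only substantive step is the degree-versus-roots argument that rules out all nontrivial coefficients of $Q$.
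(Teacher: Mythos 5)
Your proof is correct, and it diverges from the paper's in an instructive way. For the reduction step the two arguments are morally the same: you observe that the polynomial $Q(z)=-\partial_y\alpha_1\,z^2+(\partial_y\alpha_2-\partial_x\alpha_1)\,z+\partial_x\alpha_2$ of degree at most $2$ vanishes at the $d\geq 3$ distinct slopes, hence is identically zero; the paper reaches the same three equations $\partial_x\alpha_2=0$, $\partial_x\alpha_1-\partial_y\alpha_2=0$, $\partial_y\alpha_1=0$ through its normalized machinery of Theorem~\ref{normalcarac}, noting that $M_{1:3}=M_{4:d}=0$ (constant slopes) forces $C_d=0$ and inverting the Vandermonde matrix $V_{1:3}$ — which is exactly your degree-versus-roots observation in matrix form. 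The genuine difference is in the conclusion: the paper restricts to polynomial solutions $\alpha_1(x)=\sum r_j x^j$, $\alpha_2(y)=\sum q_j y^j$, and then invokes the a priori bound that the symmetry algebra of a $d$-web with $d\geq 3$ has dimension at most $3$ to rule out anything beyond the three generators, whereas you solve the system completely: from $\partial_y\alpha_1=0$ and $\partial_x\alpha_2=0$ the middle equation becomes $\alpha_1'(x)=\alpha_2'(y)$, so both sides equal a constant $c$ and $X=a\,\partial_x+b\,\partial_y+c\,(x\partial_x+y\partial_y)$ with no ansatz and no appeal to Hénaut's dimension theorem. Your route is therefore self-contained and slightly stronger (it proves the algebra is exactly this $3$-dimensional space directly); the paper's route has the merit of running the example through the general normalized system $(\star_d)$ and its compatibility matrix, which is the framework it wants to illustrate.
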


\begin{proof}[Démonstration]
Considérons un $d$-tissu parallèle avec $d\geq 3$, donné par des pentes constantes $p_i(x,y)$. On a donc par définition $M_{1:3}=0$ et $M_{4:d} =0$ car ces matrices ne dépendent que des dérivées des pentes, d'où $C_d =0$. Le système ($\star_d$) se réduit donc à:
\begin{align*}
-\partial_x( \alpha_2) =0,\ \ 
\partial_x( \alpha_1)-\partial_y(\alpha_2)=0, \ \
\partial_y(\alpha_1)=0.
\end{align*}
La fonction $\alpha_2$ ne dépend que de $y$ et $\alpha_1$ de $x$. On cherche des solutions du système dans la classe polynomiale. On pose $\alpha_1(x)=\underset{j=0}{\overset{k}{\sum}}r_jx^j$ et  $\alpha_2(y)=\underset{j=0}{\overset{m}{\sum}}q_jy^i$. On obtient $\alpha_1=r_0+xr_1$ et $\alpha_2=q_0+yq_1$ avec $q_1 =r_1$. L'algèbre de Lie étant au maximum de dimension $3$, on déduit que les symétries sont engendrées par $\partial_x$, $\partial_y$ et $x\partial_x +y \partial_y$.
\end{proof}

\subsection{Un 3-tissu donné par \'Elie Cartan}

Cet exemple est donné par A.Hénaut dans \cite{henaut}. On considère le $3$-tissu donné par son polynôme de présentation $P_F(z;x,y)=(z^2-1)(z-u(x))$ où $u$ est une fonction analytique de $x$. 

\begin{lem}
L'algèbre de Lie des symétries du tissu de Cartan est donnée par
$\mathfrak{g}=\lbrace \partial_y \rbrace$.
\end{lem}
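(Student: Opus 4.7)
Le plan est d'appliquer le Théorème \ref{theorem5} aux trois pentes $p_1=1$, $p_2=-1$, $p_3=u(x)$ issues de la factorisation $P_F(z;x,y)=(z-1)(z+1)(z-u(x))$.

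Pour les deux pentes constantes $p_1=1$ et $p_2=-1$, les termes $-\alpha_1\partial_x p_i - \alpha_2\partial_y p_i$ s'annulent dans \eqref{eq}. L'addition et la soustraction des deux équations linéaires obtenues conduisent au système de type Cauchy-Riemann
\[
\partial_y\alpha_1 = \partial_x\alpha_2,\qquad \partial_x\alpha_1 = \partial_y\alpha_2.
\]
Ces relations substituées dans la troisième équation (où $\partial_x p_3 = u'(x)$, $\partial_y p_3 = 0$) font disparaître le terme en $(\partial_y\alpha_2-\partial_x\alpha_1)u$ et réduisent $-\partial_y\alpha_1\, u^2$ à $-(\partial_x\alpha_2)u^2$, ce qui laisse
\[
-\alpha_1 u'(x) + (1-u(x)^2)\,\partial_x\alpha_2 = 0.
\]
Utilisant une dernière fois $\partial_x\alpha_2 = \partial_y\alpha_1$, on arrive à l'équation du premier ordre en $y$
\[
\partial_y\alpha_1 = \phi(x)\,\alpha_1,\qquad \phi(x) := \frac{u'(x)}{1-u(x)^2},
\]
bien définie par la condition de position générale $u(x)\neq\pm 1$. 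L'intégration en $y$ donne $\alpha_1(x,y)=C(x)\, e^{\phi(x)y}$ pour une fonction analytique $C$.

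Il reste à exploiter la condition d'intégrabilité $\partial_x^2\alpha_1 = \partial_y^2\alpha_1$, elle-même conséquence immédiate des relations de Cauchy-Riemann. Un calcul direct des dérivées secondes à partir de l'expression $\alpha_1 = C(x)e^{\phi(x)y}$ fournit, après mise en facteur de $e^{\phi(x)y}$, une égalité polynomiale en $y$ à coefficients analytiques en $x$; le coefficient en $y^2$ vaut
\[
C(x)\,\phi'(x)^2 = 0.
\]
Sous l'hypothèse générique que $\phi$ n'est pas constante (le cas dégénéré $\phi$ constante, équivalent à $u(x)=\tanh(\lambda x+\mu)$ par intégration, donne effectivement une algèbre de Lie de dimension $3$), on en déduit $C\equiv 0$, puis $\alpha_1 \equiv 0$. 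Les relations de Cauchy-Riemann donnent alors $\partial_x\alpha_2 = \partial_y\alpha_2 = 0$, donc $\alpha_2$ est une constante. Le générateur infinitésimal s'écrit finalement $X = c\,\partial_y$, d'où $\mathfrak{g}=\langle\partial_y\rangle$.

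La principale difficulté réside dans cette dernière étape: il faut d'une part développer explicitement $\partial_x^2(C(x)e^{\phi(x)y})$ et identifier le coefficient de $y^2$ du polynôme résultant, et d'autre part justifier que le cas $\phi'\equiv 0$ se trouve bien écarté par le caractère (implicitement générique) de $u$ dans l'énoncé.
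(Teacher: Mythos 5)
Votre preuve est correcte et suit essentiellement la même démarche que celle de l'article : obtention des relations de type Cauchy--Riemann à partir des pentes $\pm 1$, intégration en $y$ de la troisième équation donnant $\alpha_1=C(x)e^{\phi(x)y}$, puis la condition $\partial_x^2\alpha_1=\partial_y^2\alpha_1$ qui force $\alpha_1\equiv 0$ pour $u$ générique, d'où $\alpha_2$ constante. Votre identification explicite du coefficient en $y^2$ (à savoir $C\phi'^2$) et du cas dégénéré $\phi'\equiv 0$ rend simplement plus précise l'hypothèse de généricité que l'article invoque sans la détailler.
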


\begin{proof}[Démonstration]
Par définition du tissu, les pentes sont données par $p_1=1$, $p_2=-1$ et $p_3=u(x)$. Une symétrie $X=\alpha_1(x,y)\partial_x+\alpha_2(x,y)$ de ce 3-tissu doit satisfaire :
\begin{align*}
\partial_x(\alpha_2)=\partial_y(\alpha_1), \ \
\partial_x(\alpha_1)=\partial_y(\alpha_2), \ \
(u^2-1)\partial_y(\alpha_1)+\partial_x(u)\alpha_1=0.
\end{align*}

Les deux premières équations du système donnent : 
\begin{align*}
\left(B(x,y)^2+\partial_x \left(B(x,y)-\left(\frac{u'(x)}{u(x)^2-1}\right)^2\right)\right)\alpha_1=0.
\end{align*}
et les dérivées d'ordre 1 et 2 de  $\alpha_1$ donnent :
\begin{align*}
&\partial_x(\alpha_1)=B(x,y)\alpha_1, \ \ \partial^2_{x,x}(\alpha_1)=(B^2(x,y)+\partial_x(B(x,y))\alpha_1, \\
& \partial^2_{y,y}(\alpha_1)=\left( \frac{u'(x)}{u(x)^2-1}\right)^2\alpha_1,
\end{align*}
où $B(x,y)=\frac{c'(x)}{c(x)}-\left(\frac{u'(x)}{u(x)^2-1}\right)'y$.
On obtient finalement :
\begin{align*}
\left(B(x,y)^2+\partial_x \left(B(x,y)-\left(\frac{u'(x)}{u(x)^2-1}\right)^2\right)\right)\alpha_1=0.
\end{align*}
Pour une fonction $u=u(x)$ générique, on a $\alpha_1=0$ et $\partial_y(\alpha_2)=\partial_x(\alpha_2)=0$, alors $\alpha_2$ est une constante.
\end{proof}

\subsection{Le tissu de Muzsnay} 
\label{muzs}

Dans \cite{muzsnay}, suite à une série d'articles dans l'optique de la résolution de la conjecture de Blaschke sur la linéarisation des 3-tissus dans $\C^2$, Z. Muzsnay considère le 3-tissu $\mathcal{W}_M$ donné par les trois feuilletages suivant:
\begin{equation}
F_1(x,y):=x, \ F_2(x,y):=y \text{ et } F_3(x,y):=(x+y)e^{-x}.
\end{equation}
Ce tissu a fait l'objet de nombreux articles concernant sa linéarisation notamment \cite{goly} et \cite{grim}. Les différents auteurs ont donné des caractérisations des tissus linéarisables via des méthodes différentes et ne donnent pas le même résultat. Dans \cite{muzsnay}, l'auteur pour clore la controverse exhibe un biholomorphisme de linéarisation explicite. Nous proposons ici le calcul de son groupe de symétrie.\\

Le calcul des pentes du tissu fait apparaître des pentes infinies. En effectuant le changement de variables $(x,y) \mapsto (x+y, y-x)=(u,v)$, on obtient ;

\begin{lem}[Forme préparée du tissu] Le tissu $\mathcal{W}_M$ est biholomorphiquement conjugué au tissu donné par les trois feuilletages définis par $G_1(x,y):=\frac{u-v}{2}$, $G_2(x,y):=\frac{u+v}{2}$ et $G_3(x,y):=ue^{-\frac{u-v}{2}}$.
\end{lem}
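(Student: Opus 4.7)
Le plan est une vérification directe par changement de variables. D'abord, on constate que l'application $\Phi: (x,y) \mapsto (u,v) := (x+y, y-x)$ est linéaire de déterminant $2$, donc un biholomorphisme global de $\C^2$ dont la réciproque est donnée par $\Phi^{-1}(u,v) = \left(\tfrac{u-v}{2}, \tfrac{u+v}{2}\right)$. Comme le tissu $\mathcal{W}_M$ est défini par les niveaux des fonctions $F_i$, son image par $\Phi$ sera le tissu dont les feuilles sont les niveaux des fonctions $F_i \circ \Phi^{-1}$ ; rappelons que deux fonctions ayant les mêmes courbes de niveau définissent le même feuilletage.

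Le cœur de la preuve consistera alors à calculer explicitement les trois composées $F_i \circ \Phi^{-1}(u,v)$ et à constater qu'elles coïncident respectivement avec $G_1$, $G_2$ et $G_3$. Les deux premières égalités sont immédiates par substitution de $x$ et $y$. Pour la troisième, on substitue $x = \tfrac{u-v}{2}$ et $x+y = u$ dans $(x+y) e^{-x}$, ce qui donne directement $u \cdot e^{-(u-v)/2}$, d'où le résultat.

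Il n'y a donc pas d'obstacle technique réel : la démonstration se réduit à cette vérification et au choix judicieux du changement de variables. La seule subtilité conceptuelle est de motiver \emph{a priori} le choix de $\Phi$. Dans les coordonnées initiales, la pente $p_1 = -\partial_x F_1 / \partial_y F_1$ associée à $F_1 = x$ est infinie (le dénominateur s'annule), ce qui empêche l'application directe du système $(\star_3)$ obtenu au Théorème \ref{normalcarac}. L'application $\Phi$ est précisément choisie pour mélanger $x$ et $y$ de façon à rendre toutes les pentes finies dans la nouvelle carte $(u,v)$, condition nécessaire pour que la méthode développée à la section précédente puisse s'appliquer au calcul ultérieur du groupe de symétries du tissu de Muzsnay.
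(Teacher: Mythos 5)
Votre démonstration est correcte et suit exactement la même démarche que l'article : le changement de variables $(x,y)\mapsto(u,v)=(x+y,\,y-x)$, dont l'inverse donne $x=\tfrac{u-v}{2}$, $y=\tfrac{u+v}{2}$, puis la substitution directe dans $F_1$, $F_2$, $F_3$ pour obtenir $G_1$, $G_2$, $G_3$. L'article se contente d'énoncer ce changement de variables sans écrire la vérification ; votre rédaction explicite (y compris la motivation par l'annulation de $\partial_y F_1$, donc la pente infinie) est conforme et ne présente aucune lacune.
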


On peut alors calculer le groupe de symétrie du tissu dans ce nouveau système de coordonnées:

\begin{lem}
L'algèbre de Lie des symétries du tissu $\mathcal{W}_M$ mis sous forme préparée est donnée par $\mathfrak{g}=\lbrace \partial_v \rbrace$ où $c$ est une fonction ne dépendant que de $u$.
\end{lem}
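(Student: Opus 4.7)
Mon plan est de déterminer les trois pentes du tissu dans les coordonnées $(u,v)$, puis d'appliquer le Théorème \ref{theorem5} pour écrire le système régissant les générateurs infinitésimaux $X=\alpha_1(u,v)\partial_u+\alpha_2(u,v)\partial_v$. Un calcul direct à partir des feuilletages $G_1,G_2,G_3$ donne $p_1=1$, $p_2=-1$ et $p_3=1-2/u$. Les deux premières équations correspondant aux pentes constantes $p_1$ et $p_2$ sont donc
\begin{align*}
\partial_u \alpha_2 + (\partial_v\alpha_2-\partial_u\alpha_1)-\partial_v\alpha_1 & =0,\\
\partial_u \alpha_2 - (\partial_v\alpha_2-\partial_u\alpha_1)-\partial_v\alpha_1 & =0.
\end{align*}
Par somme et différence, ce sous-système équivaut à $\partial_u\alpha_2=\partial_v\alpha_1$ et $\partial_v\alpha_2=\partial_u\alpha_1$.

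La première étape clé consiste à intégrer ce sous-système. En posant $\beta_+=\alpha_1+\alpha_2$ et $\beta_-=\alpha_1-\alpha_2$, ces relations deviennent $(\partial_u-\partial_v)\beta_+=0$ et $(\partial_u+\partial_v)\beta_-=0$, dont la solution générale est $\beta_+=f(u+v)$ et $\beta_-=g(u-v)$ pour deux fonctions analytiques $f,g$ d'une variable. On obtient donc la paramétrisation
\begin{equation*}
\alpha_1=\tfrac{1}{2}\bigl(f(u+v)+g(u-v)\bigr),\qquad \alpha_2=\tfrac{1}{2}\bigl(f(u+v)-g(u-v)\bigr).
\end{equation*}

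La seconde étape est d'injecter cette paramétrisation dans l'équation associée à $p_3=1-2/u$. Avec $\partial_u p_3=2/u^2$ et $\partial_v p_3=0$, et en utilisant que $\partial_v\alpha_2-\partial_u\alpha_1=0$ et $\partial_v\alpha_1=\partial_u\alpha_2$, l'équation se simplifie (après avoir développé $1-(1-2/u)^2=4(u-1)/u^2$) en la relation scalaire
\begin{equation*}
2(u-1)\,\partial_v\alpha_1 \;=\; \alpha_1 .
\end{equation*}
En posant $s=u+v$ et $t=u-v$, cette équation devient
\begin{equation*}
(s+t-2)\bigl(f'(s)-g'(t)\bigr)=f(s)+g(t).
\end{equation*}

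L'obstacle principal, mais surmontable, est l'analyse de cette équation fonctionnelle en deux variables indépendantes $s,t$. La stratégie est de dériver cette identité par rapport à $s$, ce qui donne $f''(s)(s+t-2)=g'(t)$, puis de dériver à nouveau par rapport à $s$ pour obtenir $f'''(s)(s+t-2)+f''(s)=0$, forçant $f''(s)\equiv 0$ puis $g'(t)\equiv 0$. On en déduit $f(s)=as+b$ et $g(t)=c$, puis en reportant dans l'équation de départ on obtient $a=0$ et $c=-b$. Il vient $\alpha_1=0$ et $\alpha_2=b$ constant, ce qui établit que l'algèbre de Lie des symétries est engendrée par le seul champ $\partial_v$.
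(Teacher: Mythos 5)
Votre démonstration est correcte, mais elle suit un chemin réellement différent de celui de l'article. Après avoir obtenu, comme l'article, les pentes $p_1=1$, $p_2=-1$, $p_3=\frac{u-2}{u}$ et le système réduit $\partial_u\alpha_2=\partial_v\alpha_1$, $\partial_v\alpha_2=\partial_u\alpha_1$, $2(u-1)\partial_v\alpha_1=\alpha_1$, l'article intègre d'abord la troisième équation en $v$ (d'où $\alpha_1=c(u)e^{\frac{v}{2(u-1)}}$), réinjecte dans les deux premières et exclut le cas $\alpha_1\neq 0$ par une condition de type Riccati sur $\beta=\frac{c'(u)}{c(u)}-\frac{v}{2(u-1)^2}$, jugée impossible \emph{par comparaison des degrés en $u$ et $v$} — argument assez rapide. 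Vous faites l'inverse : vous résolvez d'abord le sous-système à pentes constantes par la décomposition de d'Alembert $\alpha_1=\tfrac12(f(u+v)+g(u-v))$, $\alpha_2=\tfrac12(f(u+v)-g(u-v))$ (comme le fait l'article pour le tissu de Clairaut), puis vous ramenez la troisième équation à l'équation fonctionnelle $(s+t-2)(f'(s)-g'(t))=f(s)+g(t)$ que vous tuez par dérivations successives, obtenant $f$ affine, $g$ constante, puis $a=0$, $c=-b$, donc $\alpha_1=0$ et $\alpha_2$ constante. Votre variante évite l'ansatz exponentiel et l'argument informel de degrés, et est entièrement élémentaire ; seul le passage « $(s+t-2)f'''(s)+f''(s)=0$ force $f''\equiv 0$ » mérite la demi-ligne de justification (dériver en $t$, variable indépendante de $s$, donne $f'''\equiv 0$, puis $f''\equiv 0$, puis $g'\equiv 0$). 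La conclusion $\mathfrak{g}=\langle \partial_v\rangle$ est identique dans les deux approches.
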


\begin{proof}[Démonstration]
Le calcul des trois pentes associées aux feuilletages donne $p_1=1$, $p_2=-1$ et $p_3=\frac{u-2}{u}$. En introduisant ces pentes dans les équations définissant les symétries, on obtient le système: $\partial_u\alpha_2=\partial_v\alpha_1$, $\partial_v\alpha_2=\partial_u\alpha_1$ et $-2 \alpha_1+(4u-4)\partial_v\alpha_1=0$.

La troisième équation s'intégre directement et donne $\alpha_1(u,v)=c(u)e^{\frac{v}{2(u-1)}}$ où $c(u)$ est une fonction qui dépend de $u$. En réintroduisant dans les deux premières équations, on obtient : $\alpha_2=\alpha_1 \times \left( 2-\frac{v}{u-1}\right)+2c'(u)(u-1)e^{\frac{v}{2(u-1)}}+c$ où $c$ est une constante. En procédant comme pour le tissu de Clairaut, on obtient l'équation différentielle $\alpha_1 \times \left(\frac{1}{4(u-1)^2}- \partial_u \beta- \beta^2 \right)=0$ avec $\beta=\frac{c'(u)}{c(u)}-\frac{v}{2(u-1)^2}$. Donc soit $\alpha_1=0$ soit le second facteur est nul ce qui est impossible par rapport aux degrés en $u$ et $v$. Donc $\alpha_1=0$ et on récupère une unique symétrie $c \partial_v$.
\end{proof}

On peut conclure en vertu de la Proposition \ref{isogroupe} que l'algèbre de Lie des symétries du tissu sous forme préparée étant isomorphe à celle de $\mathcal{W}_M$, cette dernière ne compte qu'une unique symétrie.

\subsection{Tissu de Clairaut}
\label{regularite}

Le {\bf tissu de Clairaut} étudié par A. Hénaut \cite{henaut} est un 3-tissu défini par les submersions $F_1(x,y):=y-x$, $F_2(x,y)=:=x+y$ et $F_3(x,y):=\frac{y}{x}$. Les trois pentes associées sont $p_1=1$, $p_2=1$ et $p_3=\frac{y}{x}$. Le groupe de symétrie est donné par (voir \cite{henaut}):

\begin{lem}
L'algèbre de Lie des symétries du 3-tissu de Clairaut défini par les submersions $F_1=y-x$, $F_2=y+x$ et $F_3=\frac{y}{x}$ est de dimension 3 et engendrée par les champs de vecteurs:
\begin{align*}
 & x\partial_x+y\partial_y, \ y\partial_x+x\partial_y, \\
 &\ \left( x \ell n(\left(\bigg| x^2-y^2 \bigg|\right) +y \ell n\left(\bigg|\frac{x+y}{x-y}\bigg|\right) \right)\partial_x + \left( y \ell n\left(\bigg| x^2-y^2 \bigg|\right) +x \ell n\left(\bigg|\frac{x+y}{x-y}\bigg|\right)\right) \partial_y .
\end{align*}
\end{lem}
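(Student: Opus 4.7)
Le plan consiste à appliquer le Théorème \ref{theorem5} aux trois pentes $p_1 = 1$, $p_2 = -1$ et $p_3 = y/x$, ce qui fournit un système de trois EDP pour les composantes $(\alpha_1, \alpha_2)$ d'un générateur infinitésimal. Les deux premières pentes étant constantes, leurs dérivées partielles s'annulent, et la somme puis la différence des deux équations correspondantes donnent les relations croisées $\partial_x \alpha_2 = \partial_y \alpha_1$ et $\partial_y \alpha_2 = \partial_x \alpha_1$. Celles-ci équivalent à dire que $\alpha_1 + \alpha_2$ ne dépend que de $s := x+y$ et que $\alpha_1 - \alpha_2$ ne dépend que de $t := x-y$. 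On paramètre donc les solutions des deux premières équations par $\alpha_1 = \tfrac{1}{2}(f(s) + g(t))$ et $\alpha_2 = \tfrac{1}{2}(f(s) - g(t))$ pour deux fonctions analytiques $f, g$ à déterminer.

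On reporte cette forme dans la troisième équation. Après multiplication par $x^2$ et utilisation des deux identités précédentes, celle-ci se réduit à $\alpha_1\, y - \alpha_2\, x + (x^2 - y^2)\,\partial_y \alpha_1 = 0$, et s'exprime dans les variables $(s,t)$ sous la forme
\begin{equation*}
t\bigl[s f'(s) - f(s)\bigr] = s\bigl[t g'(t) - g(t)\bigr].
\end{equation*}
Le membre de gauche est linéaire en $t$ et le membre de droite linéaire en $s$ : il y a séparation des variables. On en déduit $[sf'(s) - f(s)]/s = [tg'(t) - g(t)]/t = K$ pour une constante $K \in \C$, et les deux EDO linéaires d'ordre $1$ obtenues s'intègrent directement en
\begin{equation*}
f(s) = K s \ln|s| + A s, \qquad g(t) = K t \ln|t| + B t,
\end{equation*}
avec $A, B \in \C$ arbitraires.

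L'espace des solutions est donc de dimension $3$, paramétré par $(A, B, K)$. Les choix $(1,1,0)$ et $(1,-1,0)$ redonnent respectivement les générateurs polynomiaux $x\partial_x + y\partial_y$ et $y\partial_x + x\partial_y$, tandis que $(0,0,1)$ fournit le générateur logarithmique via l'identité élémentaire $s\ln|s| + t\ln|t| = x\ln|x^2 - y^2| + y\ln\bigl|(x+y)/(x-y)\bigr|$ et son analogue pour $\alpha_2$. La seule étape véritablement non triviale de la démonstration est la reconnaissance de la séparation des variables dans l'équation $t[sf'(s) - f(s)] = s[tg'(t) - g(t)]$ après le changement $(s,t)$ : c'est la constante de couplage $K$ qui engendre précisément la symétrie logarithmique, qu'une recherche naïve de solutions polynomiales manquerait entièrement.
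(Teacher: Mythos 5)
Votre démonstration est correcte et suit pour l'essentiel la même démarche que celle de l'article : réduction au système issu du Théorème \ref{theorem5}, passage aux variables $s=x+y$, $t=x-y$, réduction de la troisième équation à $\alpha_1 y-\alpha_2 x+(x^2-y^2)\partial_y\alpha_1=0$, puis séparation des variables menant à $sf'(s)-f(s)=Ks$ et $tg'(t)-g(t)=Kt$, dont l'intégration fournit les termes en $s\ln|s|$ et $t\ln|t|$, c'est-à-dire le générateur logarithmique. La seule différence, mineure mais plutôt heureuse, est que vous tirez la décomposition $f(s)$, $g(t)$ directement du système d'ordre un (sans passer par l'équation des ondes ni par l'évaluation en $y=\pm x$ qui donne $f(0)=g(0)=0$ dans l'article) et que vous gardez les constantes d'intégration $A$, $B$ à côté de la constante de couplage $K$, ce qui fait apparaître d'un seul coup l'espace de solutions de dimension $3$ et ses trois générateurs, là où l'article traite à part les solutions polynomiales avant d'« élargir » la classe des solutions.
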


Nous donnons ici une démonstration détaillée de ce résultat.

\begin{proof}[Démonstration]
D'après le théorème \ref{theorem5} et par simplication du système, les symétries vérifient:\\
$$\partial_x(\alpha_2)=\partial_y(\alpha_1), \ \ \partial_y(\alpha_2)=\partial_x(\alpha_1), \ \ 
-x^2\partial_x(\alpha_2)+y^2\partial_y(\alpha_1)-y\alpha_1+x\alpha_2=0.$$
Les deux premières lignes donnent $\partial^2_{x,x}(\alpha_1)-\partial^2_{y,y}(\alpha_1)=0$ et $\partial^2_{x,x}(\alpha_2)-\partial^2_{y,y}(\alpha_2)=0$.
Les symétries dans la classe polynomiale sont engendrées par les champs $X_1 =x\partial_x+y\partial_y$ et $X_2=y\partial_x+x\partial_y$.

Pour élargir la classe des symétries, on peut utiliser un résultat classique sur les équations aux dérivées partielles du second ordre qui donne la forme générale d'une solution :
\begin{align*}
\alpha_i=f_i(x+y)+g_i(x-y), \ i=1,2,
\end{align*}
où $f$ et $g$ sont analytiques. 
Les deux premières équations de symétries impliquent $f_1=f_2$ et $g_1=g_2$. On a donc $\alpha_1$ et $\alpha_2$ de la forme $\alpha_1 (x,y)=f(x+y)+g(x-y)$ et $\alpha_2 (x,y)=f(x+y)-g(x-y)$. La troisième équation devient:
\begin{align*}
(y-x)\left( (x+y)f'(x+y)-f(x+y)\right)+(x+y)\left((x-y)g'(x-y)-g(x-y)\right)=0.
\end{align*}
Si $y=x$, on obtient $xg(0)=0$ donc $g(0)=0$ et $g$ est de la forme $g(v)=vG(v)$ avec $v=x-y$ et $G$ est une fonction de $v$ sans terme constant. De la même manière en prenant $y=-x$ on obtient $f(t)=tF(t)$ avec $t=x+y$ et $F$ une fonction de la variable $t$. On a donc:
\begin{align*}
\alpha_1 (x,y)=(x+y)F(x+y)+(x-y)G(x-y), \ \ \alpha_2=(x+y)F(x+y)-(x-y)G(x-y).
\end{align*}
En remplaçant dans la deuxième équation et en posant $t=x+y$ et $v=x-y$, on a:
\begin{align*}
tF'(t)+vG'(v)=0,
\end{align*}
ainsi $tF'(t)=c$ et $vG'(v)=-c$ où $c$ est une constante dans $\C$. En résolvant ces équations différentielles $F(t)=c \ \ell n(\vert t \vert)$ \text{ et } $G(v)=-c \ \ell n(\vert v \vert)$. Finalement on obtient:
$$ \alpha_1 = x \ell n(\left(\bigg| x^2-y^2 \bigg|\right) +y \ell n\left(\bigg|\frac{x+y}{x-y}\bigg|\right) \text{ et } \alpha_2= y \ell n\left(\bigg| x^2-y^2 \bigg|\right) +x \ell n\left(\bigg|\frac{x+y}{x-y}\bigg|\right) .$$
Ce qui termine la démonstration.
\end{proof}

\subsection{Le tissu de Zariski}
\label{zariski}

Le tissu de Zariski est défini implicitement par $F(x,y,p)=p^3+x^my^n$ avec $m$ et $n$ dans $\N$. Ce tissu admet la factorisation suivante:
\begin{align*}
F(x,y,p)=(p+x^{\frac{m}{3}}y^{\frac{n}{3}})(p-e^{\frac{i \pi}{3}}x^{\frac{m}{3}}y^{\frac{n}{3}})(p-e^{-\frac{i \pi}{3}}x^{\frac{m}{3}}y^{\frac{n}{3}}).
\end{align*}

Le Lemme suivant est donné sans démonstration dans (\cite[Exemple 3 p.14]{henaut}):

\begin{lem}
Le 3-tissu de Zariski défini implicitement par $F(x,y,p)=p^3+x^my^n$ admet les algèbres de Lie des symétries de dimension $3$ définies par:\\

$\bullet$ $\mathfrak{g}=\lbrace x^{-m/3} \partial_x,  y^{n/3} \partial_y, x (-n+3)\partial_x +y (3+m) \partial_y\rbrace$ si $n\not=3$,

$\bullet$ $\mathfrak{g}=\lbrace x^{-m/3} \partial_x,  y \partial_y, \frac{3 x}{m+3} \partial_x +y \ell n (y) \partial_y\rbrace$ si $n=3$.
\end{lem}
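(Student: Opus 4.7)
Le plan consiste à appliquer directement le Théorème \ref{theorem5} aux trois pentes fournies par la factorisation, en exploitant leur structure multiplicative commune. En posant $w(x,y)=x^{m/3}y^{n/3}$ et en notant $c_1=-1$, $c_2=e^{i\pi/3}$, $c_3=e^{-i\pi/3}$ les trois racines cubiques distinctes de $-1$, on a $p_i=c_i\,w$ pour $i=1,2,3$. L'observation-clé est que les trois pentes partagent exactement la même dérivée logarithmique :
\begin{equation*}
\partial_x p_i = \tfrac{m}{3x}\,p_i, \qquad \partial_y p_i = \tfrac{n}{3y}\,p_i, \qquad i=1,2,3.
\end{equation*}

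Première étape : on injecte ces relations dans le système \eqref{eq}, qui se réécrit pour chaque $i$ sous la forme
\begin{equation*}
\partial_x\alpha_2 + \bigl[\partial_y\alpha_2-\partial_x\alpha_1-\tfrac{m}{3x}\alpha_1-\tfrac{n}{3y}\alpha_2\bigr]\,p_i - \partial_y\alpha_1\cdot p_i^2 = 0.
\end{equation*}
Les trois valeurs $p_1,p_2,p_3$ étant deux à deux distinctes hors du lieu $w=0$, un argument de Vandermonde (un polynôme de degré $2$ en $p_i$ s'annulant en trois points est identiquement nul) force l'annulation simultanée des trois coefficients. On en déduit $\alpha_1=\alpha_1(x)$, $\alpha_2=\alpha_2(y)$ et la relation couplée
\begin{equation*}
\alpha_2'(y)-\tfrac{n}{3y}\alpha_2(y) \;=\; \alpha_1'(x)+\tfrac{m}{3x}\alpha_1(x).
\end{equation*}
Par séparation des variables, chaque membre vaut une même constante $\lambda\in\C$, ce qui découple le problème en deux EDO linéaires du premier ordre.

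Seconde étape : on intègre ces deux EDO via les facteurs intégrants $x^{m/3}$ et $y^{-n/3}$. Pour la première on obtient toujours $\alpha_1(x)=\frac{3\lambda}{m+3}\,x + A\,x^{-m/3}$. Pour la seconde, si $n\neq 3$, on trouve $\alpha_2(y)=\frac{3\lambda}{3-n}\,y + B\,y^{n/3}$, et les trois générateurs annoncés s'identifient aux choix canoniques $(A,B,\lambda)=(1,0,0)$, $(0,1,0)$, $(0,0,1)$ (après renormalisation par $(m+3)(3-n)/3$ du dernier). L'unique point à soigner est la bifurcation $n=3$ : le facteur intégrant $y^{-1}$ produit alors un terme logarithmique, conduisant à $\alpha_2(y)=\lambda\,y\ln y + B\,y$, d'où la seconde liste de générateurs. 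Dans les deux cas l'espace des solutions est paramétré par les trois constantes libres $(\lambda,A,B)\in\C^3$, ce qui confirme que l'algèbre de symétries est de dimension $3$.

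L'obstacle principal de la preuve se limite donc à la gestion soignée du cas dégénéré $n=3$, le reste étant un enchaînement direct d'algèbre linéaire (l'annulation Vandermonde) et d'intégration d'EDO linéaires du premier ordre.
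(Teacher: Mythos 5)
Votre démonstration est correcte et suit essentiellement la même démarche que celle de l'article : réduction au système $\partial_x\alpha_2=0$, $\partial_y\alpha_1=0$ et $\partial_x\alpha_1+\frac{m}{3x}\alpha_1=\partial_y\alpha_2-\frac{n}{3y}\alpha_2$, puis intégration de deux EDO linéaires du premier ordre en distinguant le cas $n=3$ qui produit le terme logarithmique. Votre rédaction rend simplement explicites deux points laissés implicites dans l'article (l'argument de Vandermonde justifiant l'annulation des trois coefficients, et la constante de séparation $\lambda$ remplaçant les fonctions auxiliaires $b(y)$, $d(x)$), sans changer la nature de la preuve.
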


\begin{proof}[Démonstration]
Une symétrie $X=\alpha_1 \partial_x+\alpha_2\partial_y$ doit satisfaire le système composé des trois équations aux dérivées partielles suivantes après simplification : $\partial_x(\alpha_2)=0$, $\partial_y(\alpha_1)=0$ et $$(\partial_x(\alpha_1)-\partial_y(\alpha_2))x^{\frac{m}{3}}y^{\frac{n}{3}}+\frac{m}{3}x^{\frac{m}{3}-1}y^{\frac{n}{3}}\alpha_1+\frac{n}{3}x^{\frac{m}{3}}y^{\frac{n}{3}-1}\alpha_2=0.$$
Les deux premières conditions impliquent que $\alpha_1 =\alpha _1 (x)$ et $\alpha_2 =\alpha_2 (y)$. La dernière équation peut se mettre sous la forme : $\partial_x(\alpha_1)-\partial_y(\alpha_2)+\frac{m}{3x}\alpha_1+\frac{n}{3y}\alpha_2=0$ en dehors de $xy=0$. Pour déterminer $\alpha_1$, on doit résoudre $\partial_x(\alpha_1)+\frac{m}{3x}\alpha_1+ b(y)=0$ avec $b(y)=-\partial_y(\alpha_2)+\frac{n}{3y}\alpha_2$. De manière symétrique, on a $\partial_y(\alpha_2)-\frac{n}{3x}\alpha_2+ d(x)=0$ avec $d(x)=\partial_x(\alpha_1) +\frac{m}{3x}\alpha_1$. Cela revient à résoudre une équation différentielle de la forme 
$z'+a\frac{z}{t} +b=0$ pour $z(t)$. Les solutions pour $a\not=-1$ sont de la forme 
$z(t)=ct^{-a} +\frac{b}{a+1} t$ et pour $a=-1$ sont données par
$z(t)=-bt\ell n (t) +c t$. Pour $\alpha_1$, la condition $a=m/3\not =1$ est toujours satisfaite, on a donc $\alpha_1 (x)=\mu x^{-m/3} +\delta \frac{3x}{m+3}$, ce qui donne $d(x)=\delta$. Pour la composante $\alpha_2$, on doit distinguer suivant le cas $n\not=3$ i.e $a=-1$ ou $n=3$. Pour $n\not=3$, on obtient $\alpha_2 (y)= \gamma y^{n/3}+\frac{3\delta}{3-n} y$ et $\alpha_2 (y)= -\delta y \ell  (y) +\gamma y$ si $n=3$. On obtient donc des champs de la forme :
$$X=\mu (x^{-m/3} ,0) +\gamma (0,y^{n/3}) +\mu (x(-n+3),y(m+3))$$
si $n\not=3$ et $X=\mu (x^{-m/3} ,0) +\gamma (0,y^{n/3}) +\mu \left ( \frac{3x}{m+3},y \ell  (y) \right )$ sinon. 
\end{proof}

\subsection{Tissu hexagonal standard}

En suivant \cite{beau}, p.103, on introduit une classe particulière de 3-tissus dits $hexagonaux$. On va se placer dans $(\C^2,0)$ mais la construction reste valide pour $(\R^2,0)$.

\begin{dfn} Un 3-tissu dans $(\C^2,0)$  est dit hexagonal si tout hexagone suffisamment petit autour de 0 et passant par une feuille de chaque feuilletage est fermé.
\end{dfn}

Le 3-tissu hexagonal le plus simple est celui donné par les feuilletages $F_1(x,y)=x$, $F_2(x,y)=y$ et $F_3(x,y)=x+y$. Cependant, comme pour la section "Sur un tissu linéarisation", \ref{muzs}, le calcul des pentes fait encore intervenir une pente infinie. Nous allons donc présenter le tissu dans un nouveau système de coordonnées pour effectuer le calcul des symétries. 

\begin{lem}[Mise sous forme préparée]
Le tissu hexagonal donné par $F_1, \ F_2$ et $F_3$ est biholomorphiquement conjugué au tissu donné par les feuilletages $G_1(x,y)=x+y$, $G_2(x,y)=y-x$ et $G_3(x,y)=y$.
\end{lem}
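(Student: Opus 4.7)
Le but est de construire explicitement un biholomorphisme $\phi : \mathbb{C}^2 \to \mathbb{C}^2$ qui envoie, pour chaque $i \in \{1,2,3\}$, les feuilles du feuilletage défini par $F_i$ sur celles du feuilletage défini par $G_i$. Les trois submersions $F_i$ étant linéaires, il est naturel de chercher $\phi$ parmi les applications linéaires inversibles : la motivation est géométrique, puisque l'on veut redresser les trois directions de droites $\{x=c\}$, $\{y=c\}$, $\{x+y=c\}$ du plan source afin d'éliminer la pente infinie associée à $F_1=x$, qui empêche l'application directe du Théorème \ref{theorem5}.

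Concrètement, si $\phi(x,y) = (u(x,y), v(x,y))$, la condition de conjugaison des feuilletages impose l'existence de trois fonctions d'une variable $h_1, h_2, h_3$ telles que $u+v = h_1(x)$, $v-u = h_2(y)$ et $v = h_3(x+y)$. On substitue la troisième relation dans les deux précédentes, ce qui donne $u = h_1(x) - h_3(x+y)$ et l'équation fonctionnelle $2h_3(x+y) - h_1(x) = h_2(y)$. En cherchant $h_1, h_3$ affines, le choix $h_1(x) = x$, $h_3(t) = t/2$ rend le membre de gauche égal à $y$, et le système se résout avec $h_2(y) = y$.

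On pose alors $\phi(x,y) = \left( \dfrac{x-y}{2}, \dfrac{x+y}{2} \right)$. Il reste à vérifier trois points élémentaires : (i) $\phi$ est bien un biholomorphisme, puisque c'est une application linéaire de déterminant $1/2 \neq 0$ ; (ii) on a les identités $u+v=x$, $v-u=y$ et $2v=x+y$ ; (iii) en conséquence, la feuille $\{x=c\}$ est envoyée sur $\{u+v=c\}$, la feuille $\{y=c\}$ sur $\{v-u=c\}$ et la feuille $\{x+y=c\}$ sur $\{v=c/2\}$, ce qui correspond respectivement aux feuilletages définis par $G_1$, $G_2$ et $G_3$.

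Aucune étape ne présente de difficulté technique : le point principal est l'identification du bon ansatz (chercher $\phi$ linéaire), guidée par l'observation que les trois feuilletages sont donnés par des pinceaux de droites parallèles de pentes distinctes. Une rotation adéquate du plan suffit alors à rendre les trois pentes finies, ce qui justifie le passage en forme préparée utilisé dans le calcul des symétries qui suit.
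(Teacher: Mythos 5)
Votre démonstration est correcte et suit essentiellement la même démarche que l'article : l'application linéaire $\phi(x,y)=\left(\frac{x-y}{2},\frac{x+y}{2}\right)$ que vous construisez est exactement l'inverse du changement de variables $u=x+y$, $v=y-x$ utilisé dans le texte. Votre version est même un peu plus soignée, puisqu'elle vérifie explicitement que chaque feuilletage $F_i$ est envoyé sur le feuilletage $G_i$ avec l'indexation annoncée dans l'énoncé.
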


\begin{proof}[Démonstration]
On utilise à nouveau le changement de variables $u=x+y$, $v=y-x$.
\end{proof}

On peut alors calculer le groupe de symétrie du tissu dans son nouveau système de coordonnées:

\begin{lem}
L'algèbre de Lie $\mathfrak{g}$ des symétries de ce tissu hexagonal défini par les submersions $G_1(x,y)=x+y$, $G_2(x,y)=y-x$ et $G_3(x,y)=y$ est donné par $
\mathfrak{g}=\lbrace \partial_x, \partial_y,x\partial_x+ y\partial_y \rbrace$.
\end{lem}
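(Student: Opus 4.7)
The plan is to reduce the computation to the parallel case already treated in Lemma \ref{casparallel}. First I would compute the three slopes associated to the foliations $G_1$, $G_2$, $G_3$ using the formula $p_i = -\partial_x(G_i)/\partial_y(G_i)$: one finds immediately $p_1 = -1$, $p_2 = 1$, $p_3 = 0$. These are pairwise distinct, so the position-générale hypothesis is satisfied, and most importantly they are all \emph{constant} functions of $(x,y)$.

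By definition, a 3-tissu whose three slopes $p_i(x,y)$ are constants is parallèle in the sense of the preceding subsection: up to a linear change of coordinates, the three foliations are given by pencils of parallel lines with slopes $p_1$, $p_2$, $p_3$. Therefore I can directly apply Lemma \ref{casparallel} with $d=3$, which asserts that the Lie algebra of symmetries of any parallel $d$-tissu (with $d\geq 3$) is generated by the three infinitesimal generators $\partial_x$, $\partial_y$ and $x\partial_x + y\partial_y$.

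This gives exactly the claimed algebra $\mathfrak{g}=\lbrace \partial_x,\ \partial_y,\ x\partial_x + y\partial_y\rbrace$. There is essentially no obstacle here: the only verification required is the constancy of the three slopes, which is immediate from the explicit form of $G_1$, $G_2$, $G_3$. One may alternatively redo the short direct computation used in the proof of Lemma \ref{casparallel}: the matrices $M_{1:3}$ and $M_{4:d}$ vanish since the $p_i$ are constant, hence the compatibility matrix $C_d$ is zero and the system $(\star_3)$ reduces to $\partial_x \alpha_2 = 0$, $\partial_y \alpha_1 = 0$, $\partial_x \alpha_1 = \partial_y \alpha_2$, whose polynomial solutions yield precisely the three generators above.
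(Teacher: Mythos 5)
Votre démonstration est correcte, mais elle suit une route différente de celle du texte. Vous observez que les pentes $p_1=-1$, $p_2=1$, $p_3=0$ sont constantes et deux à deux distinctes, de sorte que le tissu hexagonal mis sous forme préparée est exactement un $3$-tissu parallèle, et vous concluez en invoquant le Lemme \ref{casparallel}; le texte, lui, refait le calcul direct : il écrit le système \eqref{eq} pour ces trois pentes, en déduit $\partial_x\alpha_2=0$, $\partial_y\alpha_1=0$ et $\partial_x\alpha_1=\partial_y\alpha_2$, puis $\alpha_1=p+qx$, $\alpha_2=p'+qy$, d'où les trois générateurs. Votre réduction est plus économique et rend explicite le fait que ce tissu hexagonal standard n'est qu'un cas particulier du cas parallèle; en contrepartie, elle hérite de la restriction de la preuve du Lemme \ref{casparallel} (recherche de solutions dans la classe polynomiale puis argument de dimension maximale $3$), alors que le calcul direct du texte n'a pas besoin de cette restriction : l'égalité $\partial_x\alpha_1=\partial_y\alpha_2$, avec $\alpha_1=\alpha_1(x)$ et $\alpha_2=\alpha_2(y)$, force directement cette quantité à être une constante, donc donne toutes les solutions analytiques. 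Votre variante « calcul direct » en fin de proposition ($M_{1:3}=0$, $C_d=0$, système réduit) coïncide d'ailleurs avec l'argument du texte; dans les deux cas la vérification essentielle — la constance des pentes et leur distinction deux à deux — est bien faite chez vous.
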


\begin{proof}[Démonstration]
Les trois pentes $p_1=-1$, $p_2=1$ et $p_3=0$, ainsi que les trois champs de vecteurs $X_1=\partial_x-\partial_y$, $X_2=\partial_x+\partial_y$ et $X_3=\partial_x$. Pour trouver les symétries de ce tissu, on doit résoudre le système suivant:
\begin{align*}
\left\{\begin{array}{cccc}
-\partial_x(\alpha_2)-\partial_x(\alpha_1)+\partial_y(\alpha_2)+\partial_y(\alpha_1)=0, \\
-\partial_x(\alpha_2)+\partial_x(\alpha_1)-\partial_y(\alpha_2)+\partial_y(\alpha_1)=0, \\
\partial_x(\alpha_2)=0.
\end{array}\right.
\end{align*}

La troisième équation implique que $\alpha_2=\alpha_2(y)$ est une fonction de la variable $y$. La résolution implique que $\alpha_1$ ne dépend que de la variable $x$. Enfin la condition $\partial_x(\alpha_1)=\partial_y(\alpha_2)$ implique $\alpha_1=p+qx$ et $\alpha_2=p'+qy$ où $p,p',q$ sont des constantes. En séparant en fonction de constantes indépendantes, on obtient les trois générateurs.
\end{proof}

\section{Symétries des tissus implicites et polynôme de Darboux}
\label{19}
Dans cette section après avoir fait des rappels sur les polynômes de Darboux, on passe en revue les liens entre symétries et module de dérivations associé à un tissu via la courbe discriminante du polynôme de présentation. Certains de ces résultats ont été obtenus par A. Hénaut dans \cite{henaut}.

\subsection{Autour du Théorème de Darboux}

Dans cette section, on rappelle des résultats classiques sur la théorie de Darboux sur l'intégrabilité de champs de vecteurs polynomiaux. On renvoie à \cite{yako} pour plus de détails. Dans la suite, on a $\K= \R$ ou $\C$.\\

Soit un champ de vecteurs $X=\underset{i=1}{\overset{d}{\sum}}f_i(x)\partial_{x_i}$ dans $\K^d$ avec $f_i(x) \in \K[x]$, $x=(x_1,...,x_d)$. 

\begin{dfn}
Une courbe algébrique $\mathscr{C}$ définie par $\lbrace g=0 \rbrace$ avec $g \in \K[x]$  est dite invariante par le champ de vecteurs $X$ s'il existe un polynôme $K \in \K[x]$, appelé cofacteur de $\mathscr{C}$, tel que $X(g)=K.g$.  Une courbe est appelée polynôme de Darboux pour le champ $X$ s'il existe un cofacteur.
\end{dfn}

\begin{theorem}
Soit $g\in \K[x]$ un polynôme écrit sous forme irréductible $g=g_1^{n_1}\cdots g_r^{n_r}$ dans $K[x]$. Le champ de vecteurs $X$ admet la courbe $\mathscr{C}=\lbrace g=0 \rbrace$ comme courbe invariante avec un cofacteur noté $K_g$ si et seulement si chaque courbe $\mathscr{C}_i=\lbrace g_i=0 \rbrace$ est invariante avec un cofacteur $K_{f_i}$ tel que $K_g=n_1 K_{g_1}+\cdots +n_r K_{g_r}$.
\end{theorem}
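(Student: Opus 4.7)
Ma stratégie est de démontrer séparément les deux implications en exploitant le fait que le champ $X$ agit comme une dérivation sur l'anneau $\K[x]$, combiné à la factorialité de cet anneau et à l'irréductibilité des facteurs $g_i$, deux à deux premiers entre eux.

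Pour l'implication facile ($\Leftarrow$), je supposerais que pour chaque $i$, on a $X(g_i)=K_{g_i}\, g_i$, puis j'appliquerais directement la règle de Leibniz à $g=g_1^{n_1}\cdots g_r^{n_r}$. Chaque terme de la somme résultante contient $g$ en facteur, ce qui donne immédiatement $X(g)=\bigl(\sum_j n_j K_{g_j}\bigr) g$, d'où $K_g = n_1 K_{g_1}+\cdots + n_r K_{g_r}$.

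Pour l'implication réciproque ($\Rightarrow$), je partirais de $X(g)=K_g\, g$ et j'utiliserais à nouveau la règle de Leibniz pour écrire
\[
K_g\, g \;=\; \sum_{j=1}^{r} n_j\, g_j^{n_j-1}\, X(g_j)\, \prod_{i\neq j} g_i^{n_i}.
\]
L'idée clé est un argument de divisibilité : pour un indice $j$ fixé, le membre de gauche est divisible par $g_j^{n_j}$, ainsi que tous les termes de la somme d'indice $k\neq j$ (car $\prod_{i\neq k}g_i^{n_i}$ contient $g_j^{n_j}$). Par différence, le terme d'indice $j$, soit $n_j\, g_j^{n_j-1}\, X(g_j)\, \prod_{i\neq j} g_i^{n_i}$, doit aussi être divisible par $g_j^{n_j}$. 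Comme $g_j$ est irréductible et premier avec les $g_i$ pour $i\neq j$, le lemme de Gauss dans $\K[x]$ force $g_j \mid X(g_j)$ (la caractéristique nulle assurant que le facteur $n_j$ est inversible). On pose alors $X(g_j)=K_{g_j}\, g_j$, et en réinjectant dans l'égalité ci-dessus on récupère $K_g = \sum_j n_j K_{g_j}$.

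L'étape délicate, mais sans réelle obstruction, est précisément cet argument de divisibilité : il faut invoquer la factorialité de $\K[x]$ et la coprimalité des facteurs irréductibles $g_i$ pour descendre l'information de divisibilité de $g$ vers chaque $g_j$. Le reste du raisonnement se réduit à une manipulation algébrique de la règle de Leibniz.
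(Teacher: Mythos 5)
Votre démonstration est correcte et complète. Notez que l'article ne donne aucune preuve de cet énoncé : il est rappelé comme un résultat classique de la théorie de Darboux (avec renvoi à la littérature), de sorte qu'il n'y a pas d'approche « du papier » à laquelle comparer la vôtre ; votre argument --- règle de Leibniz pour le sens facile, puis, pour la réciproque, un argument de divisibilité dans l'anneau factoriel $\K[x]$ utilisant que $g_j$ est premier, qu'il ne divise pas $\prod_{i\neq j} g_i^{n_i}$ (les $g_i$ étant irréductibles et deux à deux non associés, ce qui est le sens de l'écriture \emph{sous forme irréductible}) et que $n_j\neq 0$ en caractéristique nulle --- est précisément la preuve standard de ce lemme. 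Le seul point à expliciter dans une rédaction finale est la non-association deux à deux des $g_i$, sans laquelle la conclusion $g_j \mid X(g_j)$ ne suivrait pas ; une fois ce point acquis, la réinjection de $X(g_j)=K_{g_j}g_j$ dans l'identité de Leibniz donne bien $K_g=\sum_j n_j K_{g_j}$ après simplification par $g\neq 0$.
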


Le Théorème classique suivant, appelé Théorème de Darboux, donne une interprétation dynamique de l'invariance en particulier en terme d'intégrabilité grâce à l'existence de courbes algébriques invariantes.

\begin{theorem}[Théorème de Darboux] 
Soit un champ de vecteurs $X=P(x,y)\partial_x + Q(x,y)\partial_y$ où $P(x,y),Q(x,y) \in \K [x,y]$. Le degré du champ $X$ est $m=max(deg(P(x,y)),deg(Q(x,y))$. Supposons que $X$ admet $p$ courbes algébriques irréductibles invariantes $\lbrace g_i=0 \rbrace$ de cofacteurs $K_i$, $i=1,...,p$. \\
$i)$ Il existe $\lbrace \lambda_i \rbrace_{i=1,...,p} \subset \K ,  \lambda_i$ non tous nuls tels que $\underset{i=1}{\overset{p}{\sum}}\lambda_i K_i=0$ si et seulement si $g_1^{\lambda_1}...g_p^{\lambda_p}$ est une intégrale première de $X$. \\
$ii)$ Si $p \geq \frac{m(m+1)}{2}+1$, alors il existe $\lbrace \lambda_i \rbrace_{i=1,...,p} \subset \K ,  \lambda_i$ non tous nuls tels que $\underset{i=1}{\overset{p}{\sum}}\lambda_i K_i=0$.
\end{theorem}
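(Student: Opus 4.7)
Mon plan est de traiter séparément les deux assertions : $(i)$ repose sur un calcul formel de dérivation logarithmique, $(ii)$ sur un simple argument d'algèbre linéaire fondé sur un comptage de degrés.

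Pour l'équivalence $(i)$, je partirais de l'identité obtenue en appliquant la règle de Leibniz à $H = g_1^{\lambda_1}\cdots g_p^{\lambda_p}$. Sur l'ouvert où aucun $g_i$ ne s'annule, la dérivée logarithmique fournit
\[
\frac{X(H)}{H} \;=\; \sum_{i=1}^p \lambda_i \frac{X(g_i)}{g_i} \;=\; \sum_{i=1}^p \lambda_i K_i,
\]
la seconde égalité étant immédiate à partir de la relation $X(g_i) = K_i g_i$ qui définit chaque cofacteur. L'équivalence cherchée se lit alors directement : $H$ est une intégrale première ($X(H)=0$) si et seulement si $\sum_{i=1}^p \lambda_i K_i = 0$.

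Pour $(ii)$, l'idée centrale est un comptage de dimension. J'établirais d'abord la borne $\deg(K_i) \leq m-1$ pour chaque $i$ : comme $\deg(P\partial_x g_i) \leq m + \deg(g_i) - 1$ et de même pour $Q\partial_y g_i$, le polynôme $X(g_i)$ est de degré au plus $\deg(g_i) + m - 1$ ; la relation $X(g_i) = K_i g_i$ donne alors, en comparant les degrés, la borne voulue. Les $p$ cofacteurs appartiennent donc tous à l'espace $\K_{m-1}[x,y]$ des polynômes à deux variables de degré au plus $m-1$, qui est de dimension $\frac{m(m+1)}{2}$. Dès que $p \geq \frac{m(m+1)}{2}+1$, les $K_i$ sont forcément linéairement dépendants sur $\K$, ce qui fournit les scalaires $\lambda_i$ non tous nuls recherchés.

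L'étape la plus délicate à mon sens est la justification rigoureuse de $(i)$ lorsque les exposants $\lambda_i$ appartiennent à $\K$ sans être entiers : dans ce cas, $H = g_1^{\lambda_1}\cdots g_p^{\lambda_p}$ n'est bien définie que comme fonction multiforme sur le complémentaire de $\{g_1\cdots g_p = 0\}$. On peut contourner cet écueil en reformulant tout en termes de la $1$-forme rationnelle $\omega = \sum_i \lambda_i \, dg_i/g_i$ : la relation ``$H$ intégrale première'' devient alors l'annulation $\omega(X)=0$, bien définie globalement, et le calcul ci-dessus montre que $\omega(X) = \sum_i \lambda_i K_i$. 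Le reste, notamment la démonstration de $(ii)$, se réduit à des arguments formels standard.
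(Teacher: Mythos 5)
The paper does not prove this statement: it is quoted as the classical theorem of Darboux, with a reference to Ilyashenko--Yakovenko \cite{yako}, so there is no internal proof to compare against. Your proposal is the standard textbook argument and it is correct. For $(i)$, the identity $X(H)=H\sum_i\lambda_i K_i$ obtained by Leibniz/dérivation logarithmique gives the equivalence on the dense open set where no $g_i$ vanishes, and since $\sum_i\lambda_i K_i$ is a polynomial, its vanishing there forces it to vanish identically; your reformulation via la $1$-forme $\omega=\sum_i\lambda_i\,dg_i/g_i$ is exactly the right way to give a global meaning to «~intégrale première~» lorsque les $\lambda_i$ ne sont pas entiers, puisque $X(H)=0$ équivaut à $\omega(X)=\sum_i\lambda_i K_i=0$. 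For $(ii)$, the degree bound $\deg K_i\leq m-1$ (obtenue en comparant les degrés dans $X(g_i)=K_ig_i$, l'anneau $\K[x,y]$ étant intègre) places tous les cofacteurs dans l'espace $\K_{m-1}[x,y]$ de dimension $\frac{m(m+1)}{2}$, et l'hypothèse $p\geq\frac{m(m+1)}{2}+1$ donne la dépendance linéaire voulue. Ces deux arguments sont complets et sont précisément ceux de la littérature classique; aucun écart ni lacune à signaler.
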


\subsection{Tissus implicites et Polynôme de Darboux}

 On redémontre ici le résultat présent dans \cite{henaut} qui permet de faire le lien entre la symétrie d'un tissu et la courbe discriminante, qui s'avère être un polynôme de Darboux de la symétrie.\\

\begin{theorem}
\label{discinv}
Soit $\mathcal{W}(F)$ un $d$-tissu défini par le polynôme de présentation $P_F(z;x,y)=a_0(x,y)z^d+...+a_d(x,y)$. Si le champ de vecteurs $X=\alpha_1\partial_x+\alpha_2\partial_y$ est une symétrie du tissu $\mathcal{W}(F)$ alors le discriminant de $P_F$, noté $\Delta$, est un polynôme de Darboux de $X$.
\end{theorem}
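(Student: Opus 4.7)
L'idée centrale est de partir de la forme factorisée $\Delta = a_0^{2d-2}\prod_{i<j}(p_i-p_j)^2$ rappelée dans la Section \ref{tissusimplicites} et de calculer $X(\Delta)/\Delta$ en exploitant le fait que la symétrie contrôle explicitement l'action de $X$ sur chaque pente $p_i$. D'après le Lemme \ref{critere} et le Théorème \ref{dim1}, on aura $X(p_i) = A + B p_i + C p_i^2$ pour tout $i$, où $A=\partial_x\alpha_2$, $B=\partial_y\alpha_2-\partial_x\alpha_1$ et $C=-\partial_y\alpha_1$ ne dépendent pas de $i$. On en déduit immédiatement $X(p_i-p_j) = (p_i-p_j)\bigl(B+C(p_i+p_j)\bigr)$, puis, par dérivation logarithmique du produit et en utilisant la relation de Viète $\sum_k p_k = -a_1/a_0$, l'expression explicite
$$\frac{X(\Delta)}{\Delta} = d(d-1)\,B \;+\; \frac{2(d-1)\bigl(X(a_0)-C\,a_1\bigr)}{a_0}.$$

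Le point délicat sera alors de montrer que le cofacteur ainsi obtenu est effectivement analytique, ce qui revient à établir la divisibilité $a_0 \mid X(a_0) - C a_1$ dans $\mathbb{C}\{x,y\}$. Pour cela, je propose de reformuler globalement la condition de symétrie en appliquant le critère d'invariance d'Olver directement au polynôme $P_F$, et non à ses facteurs linéaires. Sous la condition de factorisation, $P_F$ est réduit comme polynôme en $z$, et le critère d'invariance prend alors la forme
$$Y(P_F) = \lambda(x,y,z)\, P_F,$$
où $Y = \alpha_1\partial_x + \alpha_2\partial_y + (A+Bz+Cz^2)\partial_z$ est le prolongement de $X$ à l'espace des jets et $\lambda$ est analytique. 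Autrement dit, \emph{$P_F$ est lui-même un polynôme de Darboux pour $Y$} : il s'agit d'une reformulation plaisante, et peut-être intéressante en soi, de la notion de symétrie d'un tissu implicite.

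Une analyse des degrés en $z$ donnera alors $\lambda = dC\,z + \lambda_0(x,y)$, puis la comparaison du coefficient de $z^d$ dans l'identité $Y(P_F)=\lambda P_F$ livrera la relation $a_0 \lambda_0 = X(a_0) + d a_0 B - a_1 C$. L'analyticité de $\lambda_0$ fournira exactement la divisibilité voulue, et l'on conclura que $X(\Delta) = K\Delta$ pour un cofacteur $K$ analytique que l'on pourra d'ailleurs exprimer en fonction de $\lambda_0$ et de $B$. Le principal obstacle est le passage de la condition de symétrie au sens d'Olver (invariance sur la variété $\{P_F=0\}$) à l'identité polynomiale $Y(P_F)=\lambda P_F$ : cet argument de type Nullstellensatz analytique sera rendu rigoureux par le caractère réduit de $P_F$ sous la condition $R_{P_F}\neq 0$. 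Une fois ce point acquis, les calculs restants sont essentiellement de routine.
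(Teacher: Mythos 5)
Votre démonstration repose sur le même calcul central que celle de l'article : la condition de symétrie \eqref{eq} donne $X(p_i)=A+Bp_i+Cp_i^2$ avec $A,B,C$ indépendants de $i$, d'où $X(p_i-p_j)=\bigl(B+C(p_i+p_j)\bigr)(p_i-p_j)$, c'est-à-dire exactement le fait, au c\oe ur de la preuve de l'article, que chaque différence de pentes est invariante par $X$ avec un cofacteur explicite. La différence tient à l'assemblage : l'article conclut en invoquant l'additivité des cofacteurs pour un produit de courbes invariantes, tandis que vous effectuez une dérivation logarithmique du produit complet $a_0^{2d-2}\prod_{i<j}(p_i-p_j)^2$ et utilisez Viète pour obtenir un cofacteur explicite ; votre comptabilité est d'ailleurs plus soignée que celle de l'article, dont le cofacteur annoncé $\sum\lambda_{i,j}$ ne tient compte ni du carré des facteurs ni de la contribution $(2d-2)X(a_0)/a_0$ du terme $a_0^{2d-2}$. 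Votre étape supplémentaire, établir $Y(P_F)=\lambda P_F$ pour le prolongement $Y$ (autrement dit, $P_F$ est lui-même un polynôme de Darboux de $Y$) afin d'obtenir la divisibilité $a_0\mid X(a_0)-Ca_1$, est une reformulation intéressante absente de l'article, mais elle n'est pas nécessaire pour le théorème tel qu'il est posé : l'hypothèse permanente $R_{P_F}=\pm a_0\Delta\neq 0$ place d'emblée l'étude hors du lieu $a_0=0$, de sorte que le cofacteur $d(d-1)B+2(d-1)\bigl(X(a_0)-Ca_1\bigr)/a_0$ y est automatiquement analytique. Si vous tenez à la version globale, à travers $\{a_0=0\}$, notez que l'argument consistant à dire que $Y(P_F)$ s'annule sur $\{P_F=0\}$ et que $P_F$ est réduit ne fournit d'abord la division $P_F\mid Y(P_F)$ que dans $\mathrm{Frac}(\C\{x,y\})[z]$ ; pour descendre à $\C\{x,y\}[z]$ il faut supposer en outre $P_F$ primitif (pas de facteur analytique commun aux $a_i$) et invoquer le lemme de Gauss, hypothèse qui n'est pas formulée dans l'article et qu'il faudrait donc ajouter ou justifier.
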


\begin{proof}[Démonstration]
On reprend les notations introduites dans la section \ref{formepre}. Le discriminant est donné par $\Delta=a_0^{2d-2}\underset{1\leq i < j \leq d}{\prod} (p_i-p_j)^2=a_0^{2d-2}\underset{1\leq i < j \leq d}{\prod} \Delta_{i,j}^2$, avec $\Delta_{i,j}=p_i-p_j$. On a :
\begin{align*}
X.\Delta_{i,j}=X(p_i-p_j) =X(p_i)-X(p_j)=\alpha_1\partial_x(p_i)-\alpha_2\partial_y(p_i)-\alpha_1\partial_x(p_j)+\alpha_2\partial_y(p_j).
\end{align*}
Par invariance, en utilisant \eqref{eq}, on obtient: 
\begin{align*}
\alpha_1\partial_x(p_i)+\alpha_2\partial_y(p_i)=\partial_x(\alpha_2)+(\partial_y(\alpha_2)-\partial_x(\alpha_1))p_i-\partial_y(\alpha_1)p_i^2,
\end{align*}
et la même équation pour $j$. On a alors:
\begin{align*}
X(p_i-p_j)&=-(\partial_x(\alpha_1)-\partial_y(\alpha_2))(p_i-p_j)-\partial_y(\alpha_1)(p_i^2-p_j^2),\\
&=-\left [ \partial_x(\alpha_1)-\partial_y(\alpha_2)-\partial_y(\alpha_1)(p_i+p_j)\right ] (p_i-p_j), \\
&=\lambda_{i,j}\Delta_{i,j},
\end{align*}
où $\lambda_{i,j}=-(\partial_x(\alpha_1)-\partial_y(\alpha_2)-\partial_y(\alpha_1)(p_i+p_j)).$
Ainsi $\Delta_{i,j}$ est un polynôme de Darboux pour tout $\leq i<j\leq d$, et en utilisant le théorème de Darboux, on obtient que $\Delta$ est un polynôme de Darboux de cofacteur $\lambda=\underset{1\leq i < j \leq d}{\sum}\lambda_{i,j}$.
\end{proof}

\subsection{Module de dérivations et courbes discriminantes}

Les géomètres ont introduit un objet algébrique, le module de dérivation, censé contenir l'essentiel de l'information topologique sur le complémentaire d'une courbe algébrique dans $\C^2$. On renvoie à \cite{saito} pour plus de détails. Le module de dérivation est défini par:

\begin{dfn} 
Soit $\mathscr{C}$ une courbe algébrique définie par un polynôme $g$ $\in$ $\K[x,y]$. On note $Der(\mathscr{C})$ l'ensemble des champs de vecteurs logarithmiques, i.e. les champs de vecteurs tels que $g$ est un polynôme de Darboux:
\begin{align*}
Der(\mathscr{C})=\lbrace X \in Der(\C) \ \vert \ \text{il existe un cofacteur K  tel que} \ X.g=K.g \rbrace .
\end{align*}
\end{dfn}

Autrement dit, le module de dérivations est constitué des champs de vecteurs qui laisse invariante la courbe $g=0$. Le module de dérivations $Der(\mathscr{C})$ est une algèbre de Lie (voir \cite{saito}). Un corollaire direct du Théorème \ref{discinv} est:

\begin{theorem}
L'algèbre de Lie des symétries d'un tissu est une sous-algèbre de Lie de $Der(\Delta)$. 
\end{theorem}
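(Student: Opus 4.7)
The plan is to derive this as a direct corollary of Theorem \ref{discinv}, treating the statement as an inclusion of Lie algebras rather than just of sets. Concretely, I would proceed in three short steps.

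First, I would invoke Theorem \ref{discinv}: any infinitesimal generator $X = \alpha_1 \partial_x + \alpha_2 \partial_y$ of a symmetry group of the web $\mathscr{W}_F$ satisfies $X \cdot \Delta = \lambda \, \Delta$ with cofactor $\lambda = \sum_{1 \leq i < j \leq d} \lambda_{i,j}$. By the very definition of $\mathrm{Der}(\mathscr{C})$ applied to the discriminant curve $\mathscr{C} = \{ \Delta = 0 \}$, this means precisely that $X \in \mathrm{Der}(\Delta)$. Hence the underlying vector space of infinitesimal symmetries is contained in $\mathrm{Der}(\Delta)$.

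Second, I would recall that both spaces carry a Lie algebra structure induced by the usual commutator of vector fields: the symmetry algebra by the result cited just after Proposition \ref{isogroupe} (Olver, Corollary 2.40), and $\mathrm{Der}(\Delta)$ by Saito's theory (cited in \cite{saito}). Since the two bracket operations are both the standard Lie bracket of derivations on $\mathbb{C}\{x,y\}$, the inclusion obtained in the first step is automatically compatible with brackets.

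Third, it remains only to check that the symmetry algebra is stable under the bracket \emph{within} $\mathrm{Der}(\Delta)$, which is immediate: given two symmetries $X, Y$, the bracket $[X,Y]$ is again a symmetry of the web (this is the classical fact recalled in the same citation), so in particular $[X,Y] \in \mathrm{Der}(\Delta)$. No new computation is required.

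There is essentially no obstacle here: the nontrivial content has already been absorbed into Theorem \ref{discinv}, which provides the explicit cofactor $\lambda$ witnessing membership in $\mathrm{Der}(\Delta)$. The only subtlety worth flagging in the write-up is to make explicit that ``sub-algèbre de Lie'' is meant in the strong sense (closed under brackets), and this follows for free because both structures inherit the same ambient bracket.
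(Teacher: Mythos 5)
Votre démonstration est correcte et suit essentiellement la même démarche que celle de l'article : l'inclusion $X \in Der(\Delta)$ découle directement du Théorème \ref{discinv}, et la stabilité par crochet de Lie (les deux structures étant induites par le crochet usuel des champs de vecteurs) achève l'argument. Votre remarque explicite sur la compatibilité des crochets est un raffinement bienvenu mais ne change pas la nature de la preuve.
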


En effet, si $X$ est un générateur infinitésimal d'une symétrie du tissu, alors $X.\Delta =\lambda \Delta$ par le théorème \ref{discinv} et $X \in Der(\Delta)$. La stabilité par crochet de Lie induit le théorème.

\section{Perspectives}
\label{perspective}

Les propriétés des tissus implicites peuvent en partie se lire dans leurs groupes de symétries. Ces groupes sont obtenus comme  groupes de symétries de l'équations différentielles polynomiale à coefficients analytiques représentant le tissu. Un tissu est une {\bf distribution de champs de vecteurs} au sens d'Olver (voir \cite{olver1}, \cite{olver2}). Il serait sans doute intéressant d'étudier les G-structures associées à ces distributions via la méthode d'équivalence de Cartan. 

Le problème de la régularité des algèbre de Lie des symétries entre dans l'étude de la régularité des solutions d'une équation aux dérivées partielles à coefficients analytiques. Il serait bon de voir les {\bf théorèmes de type Maillet} (voir \cite{gerard1,gerard2}, \cite{ger2} et Section \ref{regularite}) existant afin de préciser les intuitions d'Alain Hénaut sur ces objets. \\

{\bf Remerciements}: Les auteurs remercient Alain Henaut pour les avoir introduit aux tissus, les nombreuses discussions et remarques.

\end{document}